\numberwithin{equation}{section}
\theoremstyle{definition}
\newtheorem{alg}{Algorithm}[section]
\newtheorem{ex}[alg]{Example}
\theoremstyle{plain}
\newtheorem{thm}[alg]{Theorem}
\newtheorem{cor}[alg]{Corollary}
\newtheorem{lem}[alg]{Lemma}
\theoremstyle{remark}
\newtheorem{rem}[alg]{Remark}
\newcommand{\av}{\operatorname{av}}
\newcommand{\sgn}{\operatorname{sgn}}
\renewcommand{\Im}{\operatorname{Im}}
\begin{document}
	\title[Curve-shortening scheme for moving boundary problems]{A fully discrete curve-shortening polygonal evolution law for moving boundary problems}
	\author{Koya Sakakibara}
	\address[K.~Sakakibara]{Department of Applied Mathematics, Faculty of Science, Okayama University of Science, 1-1 Ridaicho, Kita-ku, Okayama-shi, Okayama 700-0005, Japan; RIKEN iTHEMS, 2-1 Hirosawa, Wako-shi, Saitama 351-0198, Japan}
	\email{ksakaki@math.kyoto-u.ac.jp}
	\author{Yuto Miyatake}
	\address[Y.~Miyatake]{Cybermedia Center, Osaka University, Japan, 1-32 Machikaneyama, Toyonaka, Osaka 560-0043, Japan}
	\email{miyatake@cas.cmc.osaka-u.ac.jp}

	\keywords{Moving boundary problems; geometric numerical integration; discrete gradient method; tangential redistribution}
	\subjclass[2000]{35R35, 53C44, 76D27}
\begin{abstract}
We consider the numerical integration of moving boundary problems with the curve-shortening property, such as the mean curvature flow and Hele-Shaw flow. 
We propose a fully discrete curve-shortening polygonal evolution law. 
The proposed evolution law is fully implicit, and the key to the derivation is to devise the definitions of tangent and normal vectors and tangential and normal velocities at each vertex in an implicit manner. 
Numerical experiments show that the proposed method allows the use of relatively large time step sizes and also captures the area-preserving or dissipative property in good accuracy.
\end{abstract}
\maketitle

\section{Introduction}
\label{sec:introduction}

In this paper, we are concerned with the numerical integration of a smooth Jordan curve $\Gamma^t$ for $t\in[0,\infty)$ which evolves in the plane.
By using its $S^1$ parameterization $\bm{x}^t=\bm{x}^t(u)$ ($u\in[0,1]$), the motion of $\Gamma^t$ is described by
\begin{equation}
	\partial_t\bm{x}^t=v^t\bm{n}^t+w^t\bm{t}^t,
	\quad
	t > 0,
	\label{eq:evolution_law}
\end{equation}
where $\bm{t}^t$ and $\bm{n}^t$ respectively represent the unit tangent and unit outward normal vectors of $\Gamma^t$, and $v^t$ and $w^t$ are the normal and tangential velocities.
Both $\Gamma^t$ and $\bm{x}^t$ are called a curve.
A problem regarding the evolution of the curve $\Gamma^t$ is called a moving boundary problem.
Most planar curves have their characteristics in terms of the length and enclosed area, which are defined by
\begin{equation}
	\mathcal{L}[\Gamma^t] := \int_{\Gamma^t}\, \mathrm ds,
	\quad
	\mathcal{A}[\Omega^t] := \int_{\Omega^t}\,\mathrm d \bm{x},
	\label{leng_area_func}
\end{equation}
respectively, where $\Omega = \Omega[\Gamma]$ denotes the interior simply-connected region bounded by $\Gamma$.
We, in particular, focus on problems whose length decreases monotonically.
This property is called the curve-shortening property.
Such a problem can be seen as a gradient flow of the length functional $\mathcal{L}[\Gamma^t]$ with respect to some underlying space.
We further restrict our attention to problems that do not generate any topological changes.
Typical examples include the mean curvature flow~\cite{M56}, area-preserving mean curvature flow~\cite{G86}, and Hele-Shaw flow~\cite{H98}.
We note that it was proved in~\cite{GH86,G86,GP16,G87} that the first two examples do not allow any topological changes during the evolution, and numerical observations suggested that the same property holds for the Hele-Shaw flow~\cite{SY19}.

One approach to computing the evolution law \eqref{eq:evolution_law} is to approximate the boundary by a polygonal or polynomial curve to find a semi-discrete evolution law, and then to integrate it by applying a numerical integrator such as a Runge--Kutta method (see, e.g.~\cite{Y15,KBS16}).
These approaches are often called direct approaches, and these would be preferred for problems without any topological changes.
Note that if a flow of interest admits topological changes,
indirect approaches, such as the phase-field method and the level set method, would be preferred~\cite{JQ09,P18}, but such cases are out of the scope of the present paper.
There are mainly two difficulties for direct approaches: distributions of vertices and time step size restriction.
\begin{itemize}
\item In the continuous scenarios, the tangential velocity $w^t$ does not affect the shape of the evolving curve $\Gamma^t$~\cite{EG87}.
It only affects the parameterization of the curve $\Gamma^t$.
However, the tangential velocity plays a crucial role in discrete settings.
For example, if we set the discrete tangential velocities to $0$, there could be portions where a distribution of vertices is dense or sparse, which could make it difficult to solve the problem over long times.
\item General-purpose methods such as explicit Runge--Kutta methods cannot inherit the curve-shortening property,
which could lead to unstable or non-physical behavior.
To avoid these scenarios, we are usually forced to use sufficiently small time step sizes.
In most cases, the time step size must be of $\mathrm{O}(N^{-2})$, where $N$ denotes the number of vertices.
\end{itemize}

The first difficulty can be overcome by setting the discrete tangential velocities carefully.
The asymptotic uniform distribution method, which arranges the vertices uniformly, and the curvature adjusted method, which arranges the vertices densely where the absolute value of the curvature is large, are practically useful (see, e.g.~\cite{HLS94, K94, K97, MS04, MS06, BGN11, SY11, SY13}).

In this paper, we shall focus on the second difficulty.
To compute the moving boundary problems stably with relatively large time step sizes, we employ the idea of geometric numerical integration~\cite{HLW06}.
Namely, we intend to derive a fully discrete evolution law that guarantees the curve-shortening property independently of the time step sizes.
To achieve this goal, we consider applying the so-called discrete gradient method~\cite{G96,QC96,QM08,H10,CH11,HL14,MB16} to \eqref{eq:evolution_law}
(see also its extensions to partial differential equations~\cite{FU99,MA08,CM11,FM11,CG12,MM14}).
The method has been developed to derive energy-dissipative schemes for gradient systems and energy-preserving schemes for Hamiltonian systems.
However, the method usually requires a concrete expression of gradient systems.
Though the moving boundary problems have concrete expressions as gradient systems, these depend on the underlying spaces and could be complicated.
Moreover, taking the tangential velocities into account further complicates the derivation of the intended evolution law.

In this paper, we show that the intended curve-shortening fully discrete evolution law can be systematically constructed independently of the underlying space  even if  the concrete expression as a gradient system is not specified.
The derived evolution law is curve-shortening independently of the discrete tangential velocities, which indicates that the tangential velocities can be defined in view of the aforementioned asymptotic uniform distribution method or curvature adjusted method.
We note that it seems impossible to derive a fully discrete evolution law that inherits both the curve-shortening and area-preserving (or dissipative) properties simultaneously.
Discussions for area-preserving or dissipative evolution laws are given in Appendix \ref{sec:area}, where canonical Runga--Kutta methods play an essential role.

The contents of this paper are as follows.
In Section~\ref{sec:moving_boundary_problem}, we give a general description of the moving boundary problem \eqref{eq:evolution_law}, show fundamental properties for the length and enclosed area and provide several examples.
In Section~\ref{sec:polygonal_moving_boundary_problem}, we discretize the moving boundary problem in space, where the smooth curve $\bm{x}^t$ of the original problem is discretized by using a polygonal curve.
In Section~\ref{sec:time-discretized_polygonal_moving_boundary_problem}, which is the main section, we describe the intended fully discrete evolution law.
Numerical experiments are given in Section~\ref{sec:numerical_experiments}, and concluding remarks are given in Section~\ref{sec:concluding_remarks}.

\section{Moving boundary problems}
\label{sec:moving_boundary_problem}
	
This section reviews some fundamental properties of the moving boundary problem \eqref{eq:evolution_law} and shows several examples. 
	
\subsection{Fundamental properties of moving boundary problems}
For a smooth curve $\Gamma^t$ with $S^1$ parameterization $\bm{x}^t=\bm{x}^t(u)$ ($u\in[0,1]$), the unit tangent vector $\bm{t}^t$ and the unit outward normal vector $\bm{n}^t$ are respectively defined by
\begin{equation}
\bm{t}^t=\partial_{s(\bm{x}^t)}\bm{x}^t=\frac{\partial_u\bm{x}^t}{|\partial_u\bm{x}^t|},
\quad
\bm{n}^t=-J\bm{t}^t.
\label{eq:unit_tangent_unit_outward_normal}
\end{equation}
Here, $\partial_{s(\bm{x})}$ denotes the differential operator with respect to the arclength parameter of the curve $\bm{x}$, and $J$ is the rotation matrix of angle $\pi/2$:
\begin{equation}
	\partial_{s(\bm{x})}\mathsf{F}
	:=
	\frac{1}{|\partial_u\bm{x}|}\partial_u\mathsf{F},
	\quad
	J=
	\begin{pmatrix}
		0 & -1\\
		1 & 0
	\end{pmatrix}.
	\label{eq:arclength_parameter_derivative_rotation}
\end{equation}
Here and hereafter, in this section, the symbol $\mathsf{F}$ represents a general function defined on a suitable curve.
We note that, as already mentioned in Section~\ref{sec:introduction}, the normal velocity $v^t$ is essential in the formulation \eqref{eq:evolution_law} because it determines the dynamics of the curve $\Gamma^t$ and the tangential velocity $w^t$ only affects the parameterization of the curve $\Gamma^t$~\cite{EG87}.

The following theorem characterizes how the length and enclosed area \eqref{leng_area_func} evolve under the evolution law \eqref{eq:evolution_law}.
	
\begin{thm}[see, e.g.~\cite{G93,K08}]
\label{thm:dtdL_dtdA_dtdG}
%Suppose that a smooth Jordan curve $\Gamma^t$ evolves according to the evolution law \eqref{eq:evolution_law}.
%Then, we have the following relations:
For the smooth Jordan curve $\Gamma^t$ that evolves according to the evolution law \eqref{eq:evolution_law}, it follows that
\begin{equation}
	\partial_t\mathcal{L}[\Gamma^t]
    =
    \int_{\Gamma^t}\kappa^tv^t\,\mathrm{d}s,
    \quad
    \partial_t\mathcal{A}[\Omega^t]
    =
    \int_{\Gamma^t}v^t\,\mathrm{d}s,
	\label{eq:dtdL_dtdA_dtdG}
\end{equation}
where $\kappa^t$ denotes the curvature of $\Gamma^t$ in $-\bm{n}^t$ direction.
\end{thm}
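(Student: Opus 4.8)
The plan is to compute the two time derivatives directly from the integral definitions in \eqref{leng_area_func}, using the evolution law \eqref{eq:evolution_law} and the geometric formulas \eqref{eq:unit_tangent_unit_outward_normal}–\eqref{eq:arclength_parameter_derivative_rotation}. The central issue is that the arclength element $\mathrm{d}s=|\partial_u\bm{x}^t|\,\mathrm{d}u$ itself depends on $t$, so differentiation under the integral sign must account for the motion of the parameterization, not merely the integrand. I would therefore first transcribe both functionals into the fixed parameter domain $u\in[0,1]$, writing $\mathcal{L}[\Gamma^t]=\int_0^1|\partial_u\bm{x}^t|\,\mathrm{d}u$ and, via the divergence/Green's theorem, $\mathcal{A}[\Omega^t]=\tfrac12\int_0^1\langle\bm{x}^t,J\partial_u\bm{x}^t\rangle\,\mathrm{d}u$, so that the $t$-derivative can be moved inside and taken against a $t$-independent domain.

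The \textbf{key computation for the length} is the variation of the metric factor. Differentiating $|\partial_u\bm{x}^t|=\langle\partial_u\bm{x}^t,\partial_u\bm{x}^t\rangle^{1/2}$ in $t$ and swapping $\partial_t$ with $\partial_u$ gives $\partial_t|\partial_u\bm{x}^t|=\langle\bm{t}^t,\partial_u(\partial_t\bm{x}^t)\rangle$. Substituting $\partial_t\bm{x}^t=v^t\bm{n}^t+w^t\bm{t}^t$ and integrating by parts in $u$ moves the $\partial_u$ back onto $\bm{t}^t$; the tangential part produces a total derivative that integrates to zero over the closed curve, while the normal part yields $-\langle\partial_{s}\bm{t}^t,v^t\bm{n}^t\rangle\,\mathrm{d}s$. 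Invoking the Frenet relation $\partial_s\bm{t}^t=\kappa^t\bm{n}^t$ (consistent with the stated sign convention that $\kappa^t$ is the curvature in the $-\bm{n}^t$ direction, which I would verify against \eqref{eq:unit_tangent_unit_outward_normal}) collapses this to $\int_{\Gamma^t}\kappa^t v^t\,\mathrm{d}s$, as claimed.

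The \textbf{key computation for the area} is cleaner: differentiating the bilinear expression gives two terms that combine, after one integration by parts exploiting $J^{\!\top}=-J$ and the periodicity of $\bm{x}^t$, into $\int_0^1\langle\partial_t\bm{x}^t,J\partial_u\bm{x}^t\rangle\,\mathrm{d}u$. Since $J\partial_u\bm{x}^t=|\partial_u\bm{x}^t|\,J\bm{t}^t=-|\partial_u\bm{x}^t|\,\bm{n}^t$ by \eqref{eq:unit_tangent_unit_outward_normal}, and $\langle\partial_t\bm{x}^t,\bm{n}^t\rangle=v^t$ because $\bm{t}^t$ and $\bm{n}^t$ are orthonormal, this reduces to $\int_{\Gamma^t}v^t\,\mathrm{d}s$; here the sign convention on the outward normal must be tracked carefully to fix the orientation of Green's theorem.

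The step I expect to be the \emph{main obstacle} is bookkeeping the orientation and sign conventions consistently: the definition $\bm{n}^t=-J\bm{t}^t$, the declared meaning of $\kappa^t$ as curvature in the $-\bm{n}^t$ direction, and the orientation of the Jordan curve relative to its interior $\Omega^t$ must all be reconciled so that no spurious minus sign survives in either identity. The analytic content—commuting derivatives, one integration by parts each, and the Frenet formula—is routine once the conventions are pinned down.
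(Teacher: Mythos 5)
Your length computation is exactly the paper's argument: differentiate $\int_0^1|\partial_u\bm{x}^t|\,\mathrm{d}u$, commute $\partial_t$ with $\partial_u$, integrate by parts once in $u$, and invoke Frenet (after the integration by parts the $w^t$ contribution in fact vanishes pointwise, since $\partial_u\bm{t}^t$ is purely normal; the total-derivative cancellation you describe is what happens in the variant without integration by parts -- either works). Your area computation, however, is a genuinely different and shorter route than the paper's. The paper differentiates $\mathcal{A}[\Omega^t]=\tfrac12\int_0^1\bm{x}^t\cdot\bm{n}^t|\partial_u\bm{x}^t|\,\mathrm{d}u$, which forces it to first derive the auxiliary identities \eqref{eq:dtdr_dtdn1} and \eqref{eq:dtdr_dtdn2} for $\partial_t|\partial_u\bm{x}^t|$ and $\partial_t\bm{n}^t$, and then to remove the resulting $\partial_{s}v^t$ and $\partial_{s}w^t$ terms by two further integrations by parts. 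Your shoelace form needs only bilinearity, one integration by parts exploiting $J^{\mathrm{T}}=-J$, and no Frenet formula at all, with the tangential velocity dropping out automatically. What the paper's heavier route buys is precisely those auxiliary identities, in keeping with its stated aim that the proof be mimicked in the discretization; on the other hand, the discrete area computation in Theorem~\ref{thm:dtdL_dtdA_pmbp}, based on $\tfrac12\sum_{i}J\bm{X}_{i-1}\cdot\bm{X}_i$, is exactly the discrete analogue of your continuous argument, so your route aligns at least as well with the later development.

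Two of your quoted intermediate identities carry wrong signs under this paper's conventions, and as written they would propagate to the wrong final signs, so fix them. First, since $\kappa^t$ is the curvature in the $-\bm{n}^t$ direction, the Frenet relation here is $\partial_{s(\bm{x}^t)}\bm{t}^t=-\kappa^t\bm{n}^t$ (cf.\ \eqref{eq:Frenet_without_arclength}), not $\partial_{s}\bm{t}^t=\kappa^t\bm{n}^t$ as you assert; with your sign, your expression $-\langle\partial_{s}\bm{t}^t,v^t\bm{n}^t\rangle$ integrates to $-\int_{\Gamma^t}\kappa^tv^t\,\mathrm{d}s$. Second, for the positive (counterclockwise) orientation -- which is the orientation making $\bm{n}^t=-J\bm{t}^t$ outward -- one has $\langle\bm{x},J\partial_u\bm{x}\rangle=x_2\partial_ux_1-x_1\partial_ux_2$, so the correct transcription is $\mathcal{A}[\Omega^t]=-\tfrac12\int_0^1\langle\bm{x}^t,J\partial_u\bm{x}^t\rangle\,\mathrm{d}u$, equivalently $\tfrac12\int_0^1\langle J\bm{x}^t,\partial_u\bm{x}^t\rangle\,\mathrm{d}u$, with the opposite sign to yours; with your sign, your own (correct) reduction $J\partial_u\bm{x}^t=-|\partial_u\bm{x}^t|\bm{n}^t$ yields $-\int_{\Gamma^t}v^t\,\mathrm{d}s$. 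These are exactly the bookkeeping hazards you flagged and promised to verify, and both fixes are purely local: the structure of your argument is sound and, once the two signs are corrected, it proves the theorem.
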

	
We here sketch the proof because mimicking the proof plays an important role when discretizing the evolution law \eqref{eq:evolution_law}.
	
\begin{proof}
First, we consider the length.
A direct calculation yields that 
\begin{align}
	\partial_t\mathcal{L}[\Gamma^t]
	&=
	\int_0^1\partial_t|\partial_u\bm{x}^t|\,\mathrm{d}u
	=
	\int_0^1\frac{\partial_u\bm{x}^t}{|\partial_u\bm{x}^t|}\cdot\partial_t\partial_u\bm{x}^t\,\mathrm{d}u
	=
	-\int_0^1 \partial_u\left(\frac{\partial_u\bm{x}^t}{|\partial_u\bm{x}^t|}\right)\cdot\partial_t\bm{x}^t\,\mathrm{d}u.
	\label{eq:dtdL_calculation}
\end{align}
The definitions of the unit tangent and unit outward normal vectors \eqref{eq:unit_tangent_unit_outward_normal} and the Frenet formulae  $\partial_{s(\bm{x}^t)}\bm{t}^t= -\kappa^t\bm{n}^t$ and $\partial_{s(\bm{x}^t)}\bm{n}^t = \kappa^t\bm{t}^t$ imply that
\begin{align}
	\partial_u\left(\frac{\partial_u\bm{x}^t}{|\partial_u\bm{x}^t|}\right)
	=
	\partial_u\bm{t}^t
	=
	-\kappa^t|\partial_u\bm{x}^t|\bm{n}^t.
	\label{eq:Frenet_without_arclength}
\end{align}
Hence we obtain
\begin{equation}
	\partial_t\mathcal{L}[\Gamma^t]
	=
	\int_0^1\kappa^t|\partial_u\bm{x}^t|\bm{n}^t\cdot(v^t\bm{n}^t+w^t\bm{t}^t)\,\mathrm{d}u
	=
	\int_{\Gamma^t}\kappa^tv^t\,\mathrm{d}s.
\end{equation}
		
Next, we consider the enclosed area.
Using the Frenet formulae, we have
\begin{align}
    \partial_t|\partial_u\bm{x}^t|
    &=
    \frac{\partial_u\bm{x}^t}{|\partial_u\bm{x}^t|}\cdot\partial_t\partial_u\bm{x}^t
    =
    \bm{t}^t\cdot\partial_u(v^t\bm{n}^t+w^t\bm{t}^t)\\
    &=
    |\partial_u\bm{x}^t|\bm{t}^t\cdot(\partial_{s(\bm{x}^t)}v^t\bm{n}^t+v^t\partial_{s(\bm{x}^t)}\bm{n}^t+\partial_{s(\bm{x}^t)}w^t\bm{t}^t+w^t\partial_{s(\bm{x}^t)}\bm{t}^t)\\
    &=
    (\kappa^tv^t+\partial_{s(\bm{x}^t)}w^t)|\partial_u\bm{x}^t|.
    \label{eq:dtdr_dtdn1}
\end{align}
A similar computation shows that
\begin{align}
    \partial_t\bm{t}^t
    &=
    -\frac{\partial_t|\partial_u\bm{x}^t|}{|\partial_u\bm{x}^t|}\partial_u\bm{x}^t+\frac{\partial_t\partial_u\bm{x}^t}{|\partial_u\bm{x}^t|}\\
    &=
    -\frac{\partial_u\bm{x}^t}{|\partial_u\bm{x}^t|}(\kappa^tv^t+\partial_{s(\bm{x}^t)}w^t)+\partial_{s(\bm{x}^t)}(v^t\bm{n}^t+w^t\bm{t}^t)\\
    &=
    (\partial_{s(\bm{x}^t)}v^t-\kappa^tw^t)\bm{n}^t.
\end{align}
Rotating this by angle $-\pi/2$ gives
\begin{align}
    \partial_t\bm{n}^t
	=
	-(\partial_{s(\bm{x}^t)}v^t-\kappa^tw^t)\bm{t}^t.
	\label{eq:dtdr_dtdn2}
\end{align}
Using \eqref{eq:dtdr_dtdn1} and \eqref{eq:dtdr_dtdn2}, we obtain
\begin{align}
	\partial_t\mathcal{A}[\Omega^t]
	&=
	\frac{1}{2}\int_0^1\partial_t(\bm{x}^t\cdot\bm{n}^t|\partial_u\bm{x}^t|)\,\mathrm{d}u\\
	&=
	\frac{1}{2}\int_0^1\left[(\partial_t\bm{x}^t\cdot\bm{n}^t+\bm{x}^t\cdot\partial_t\bm{n}^t)|\partial_u\bm{x}^t|+\bm{x}^t\cdot\bm{n}^t\partial_t|\partial_u\bm{x}^t|\right]\,\mathrm{d}u\\
	&=
	\frac{1}{2}\int_{\Gamma^t}\left[v^t-(\partial_{s(\bm{x}^t)}v^t-\kappa^tw^t)\bm{x}^t\cdot\bm{t}^t+(\kappa^tv^t+\partial_{s(\bm{x}^t)}w^t)\bm{x}^t\cdot\bm{n}^t\right]\,\mathrm{d}s.
\end{align}
The proof is completed by performing integration by parts:
\begin{align}
    \int_{\Gamma^t}
    \partial_{s(\bm{x}^t)}v^t\bm{x}^t\cdot\bm{t}^t\,\mathrm{d}s
    &=
    -\int_{\Gamma^t}v^t(\partial_{s(\bm{x}^t)}\bm{x}^t\cdot\bm{t}^t+\bm{x}^t\cdot\partial_{s(\bm{x}^t)}\bm{t}^t)\,\mathrm{d}s\\
    &=
    -\int_{\Gamma^t}v^t(1-\kappa^t\bm{x}^t\cdot\bm{n}^t)\,\mathrm{d}s,\\
    \int_{\Gamma^t}
    \partial_{s(\bm{x}^t)}w^t\bm{x}^t\cdot\bm{n}^t\,\mathrm{d}s
    &=
    -\int_{\Gamma^t}w^t(\partial_{s(\bm{x}^t)}\bm{x}^t\cdot\bm{n}^t+\bm{x}^t\cdot\partial_{s(\bm{x}^t)}\bm{n}^t)\,\mathrm{d}s\\
    &=
    -\int_{\Gamma^t}w^t\kappa^t\bm{x}^t\cdot\bm{t}^t\,\mathrm{d}s.
\end{align}

\end{proof}
	
The relation \eqref{eq:Frenet_without_arclength} can be interpreted in such a way that the most left-hand side determines the curvature and the unit outward normal vector simultaneously.
This viewpoint will be essential in discretizing the evolution law \eqref{eq:evolution_law}.
	
\subsection{Examples}
	
We give three examples: the mean curvature flow, area-preserving mean curvature flow and Hele-Shaw flow. 
They differ in the definition of the normal velocity $v^t$.
For all cases, the length functional decays monotonically, which is called the curve-shortening property.
The enclosed area is decreasing for the mean curvature flow, while it is preserving for the latter two flows. 
		
\begin{ex}[mean curvature flow]
\label{ex:mean_curvature_flow}
The mean curvature flow is the simplest moving boundary problem, which was originally proposed in~\cite{M56} to describe an ideal grain boundary motion in two dimensions.
This flow can also be understood as a model of the motion of a super elastic rubber band, with a small mass in a viscous medium~\cite{G86}.
The velocity of the mean curvature flow is given by 
\begin{equation}
	v^t=-\kappa^t,
	\quad
	t>0.
	\label{eq:mean_curvature_flow}
\end{equation}
Substituting this expression into the relations \eqref{eq:dtdL_dtdA_dtdG}, we obtain the curve-shortening property:
\begin{equation}
	\partial_t\mathcal{L}[\Gamma^t]
	=
	-\int_{\Gamma^t}(\kappa^t)^2\,\mathrm{d}s
	< 0
	\label{eq:dtdL_mean_curvature_flow}
\end{equation}
and the area-dissipative property:
\begin{equation}
	\partial_t\mathcal{A}[\Omega^t]
	=
	-\int_{\Gamma^t}\kappa^t\,\mathrm{d}s
	=
	-2\pi<0.
	\label{eq:MCF_area_speed}
\end{equation}
%Namely, the length decays monotonically, and the enclosed area decays at constant speed $-2\pi$.
It is also well known that any smooth Jordan curve which evolves according to the mean curvature flow becomes convex in a finite time~\cite{G87} and further shrinks to a point~\cite{GH86}.

From the first relation in \eqref{eq:dtdL_dtdA_dtdG}, the curvature $\kappa^t$ can be understood as the first variation of the length functional.
Thus the mean curvature flow is the $L^2$ gradient flow of the length functional on the space of smooth planar curves.
\end{ex}
	
\begin{ex}[area-preserving mean curvature flow]
\label{ex:area-preserving_mean_curvature_flow}
The area-preserving mean curvature flow, which is a model of a super elastic rubber band surrounding an incompressible fluid, is a modification of the mean curvature flow with the area-preservation constraint~\cite{G86}.
The normal velocity of this flow is given by
\begin{equation}
	v^t=-\kappa^t+\langle\kappa^t\rangle_{\Gamma^t},
	\quad
	t>0,
\end{equation}
where $\langle\mathsf{F}\rangle_\Gamma$ denotes the average of a function $\mathsf{F}$ on a curve $\Gamma$:
\begin{equation}
	\langle\mathsf{F}\rangle_\Gamma
	=
	\frac{1}{\mathcal{L}[\Gamma]}\int_\Gamma\mathsf{F}\,\mathrm{d}s.
\end{equation}
For this flow, we have the curve-shortening property:
\begin{align}
	\partial_t\mathcal{L}[\Gamma^t]
	&=
	\int_{\Gamma^t}\kappa^t(-\kappa^t+\langle\kappa^t\rangle_{\Gamma^t})\,\mathrm{d}s
	=
	-\int_{\Gamma^t}(\kappa^t)^2\,\mathrm{d}s+\frac{1}{\mathcal{L}[\Gamma^t]}\left(\int_{\Gamma^t}\kappa^t\,\mathrm{d}s\right)^2\\
	&=
	\frac{1}{\mathcal{L}[\Gamma^t]}\left(\left(\int_{\Gamma^t}\kappa^t\,\mathrm{d}s\right)^2-\int_{\Gamma^t}1^2\,\mathrm{d}s\int_{\Gamma^t}(\kappa^t)^2\,\mathrm{d}s\right)
	\le
	0
\end{align}
and the area-preserving property:
\begin{align}
	\partial_t\mathcal{A}[\Gamma^t]
	=
	\int_{\Gamma^t}\left(-\kappa^t+\langle\kappa^t\rangle_{\Gamma^t}\right)\,\mathrm{d}s
	=
	0,
\end{align}
where the Cauchy--Schwarz inequality is used.
The area-preserving mean curvature flow is the $L^2$ gradient flow of the length functional along curves which enclose a fixed area.
\end{ex}
	
\begin{ex}[Hele-Shaw flow]
\label{ex:Hele-Shaw_flow}
The Hele-Shaw flow, which was originally studied by experiments in~\cite{H98}, describes the motion of viscous fluid in a quasi-two-dimensional space.
The normal velocity of this flow is given by
\begin{equation}
	v^t=-\nabla p^t\cdot\bm{n}^t, \quad t>0,
	\label{eq:Hele-Shaw0}
\end{equation}
where $p^t$ is the solution to the Laplace equation
\begin{equation}
	\begin{dcases*}
		\triangle p^t=0&in $\Omega^t$, $t>0$,\\
		p^t=\sigma\kappa^t&on $\Gamma^t$, $t>0$.
	   \end{dcases*}
	        \label{eq:Hele-Shaw}
\end{equation}
Here, $\sigma$ denotes the surface tension coefficient  (see~\cite{L93,GV06} for details).
It readily follows from \eqref{eq:dtdL_dtdA_dtdG}, \eqref{eq:Hele-Shaw0} and \eqref{eq:Hele-Shaw}  that the Hele-Shaw flow satisfies the curve-shortening property:
\begin{align}
	\partial_t\mathcal{L}[\Gamma^t]
	&=
	-\frac{1}{\sigma}\int_{\Gamma^t}p^t\nabla p^t\cdot\bm{n}^t\,\mathrm{d}s
	=
	-\frac{1}{\sigma}\int_{\Omega^t}\nabla\cdot(p^t\nabla p^t)\,\mathrm{d}\bm{x}
	=
	-\frac{1}{\sigma}\int_{\Omega^t}|\nabla p^t|^2\,\mathrm{d}\bm{x}
	\le
	0
\end{align}
and the area-preserving property:
\begin{equation}
	\partial_t\mathcal{A}[\Gamma^t]
	=
	-\int_{\Gamma^t}\nabla p^t\cdot\bm{n}^t\,\mathrm{d}s
	=
	-\int_{\Omega^t}\nabla\cdot\nabla p^t\,\mathrm{d}\bm{x}
	=
	0.
\end{equation}
The Hele-Shaw flow can be regarded as the $\Im(\Lambda_{\Gamma^t})$ gradient flow of the length functional on the space of functions of which the integral on the curve is equal to $0$ (see, e.g.~\cite{K08,KTY13} for details).
Here, $\Lambda_\Gamma$ denotes the Dirichlet-to-Neumann operator associated with a curve $\Gamma$.
\end{ex}
	
\section{Semi-discrete polygonal moving boundary problems}	\label{sec:polygonal_moving_boundary_problem}
	
In this section, we discretize the moving boundary problem \eqref{eq:evolution_law} in space.
The discretization is based on~\cite{SY19,SY13}.
Below we give definitions of discrete tangent vectors, discrete outward normal vectors, discrete curvatures, discrete tangential velocities and discrete normal velocities.
Although the contents of this section are not new, we give the details as preliminaries to Section~\ref{sec:time-discretized_polygonal_moving_boundary_problem}.

\subsection{Semi-discrete evolution law}	
Let $\Gamma^t$ be an $N$-sided polygonal Jordan curve:
\begin{equation}
	\Gamma^t=\bigcup_{i=1}^N\Gamma_i^t,
	\quad	
	\Gamma_i^t=(\bm{X}_{i-1}^t,\bm{X}_i^t):=\{(1-\lambda)\bm{X}_{i-1}^t+\lambda\bm{X}_i^t\mid\lambda\in(0,1)\},
\end{equation}
where $\bm{X}_i^t \in \mathbb{R}^2$ denotes the coordinate of the $i$th vertex of $\Gamma^t$.
Here and hereafter, for quantities $\{\mathsf{F}_i\}_{i=1}^N$ associated with an $N$-sided polygonal Jordan curve, we adopt the periodic notation: we always assume that $\mathsf{F}_{i}:=\mathsf{F}_{i \text{ mod } N}$ (in particular, $\mathsf{F}_0:=\mathsf{F}_N$ and $\mathsf{F}_{N+1}:=\mathsf{F}_1$).
Moreover, we also denote a polygonal curve $\Gamma$ as $\Gamma(\bm{X})$ whenever we want to emphasize that the vertices of $\Gamma$ are $\{\bm{X}_i\}_{i=1}^N$.

Let us consider a semi-discrete evolution law of the form
\begin{equation}
	\partial_t\bm{X}_i^t
	=
	V_i^t\bm{N}_i^t+W_i^t\bm{T}_i^t,
	\quad
	i=1,2,\ldots,N,\ 
	t > 0,
	\label{eq:polygonal_evolution_law}
\end{equation}
where $\bm{T}_i^t$ and $\bm{N}_i^t$ respectively represent the unit tangent and unit outward normal vectors of $\Gamma^t$ at the vertex $\bm{X}_i^t$.
Definitions of $\bm{N}_i^t$, $\bm{T}_i^t$, $V_i^t$ and $W_i^t$ are given in the subsequent subsections.

In what follows, we use lower case letters for functions or quantities on edges.
For example, the unit tangent and unit outward normal vectors on the $i$th edge are denoted by $\bm{t}_i^t$ and $\bm{n}_i^t$, respectively.

\subsection{Definitions of the unit tangent vector, unit outward normal vector and curvature}	\label{subsec:tangent_normal_curvature_pmbp}
	
In this subsection, we give definitions of several quantities that appear in the semi-discrete evolution law \eqref{eq:polygonal_evolution_law}.
Given the vertices $\{\bm{X}_i^t\}_{i=1}^N$, the unit tangent vectors $\{\bm{t}_i^t\}_{i=1}^N$  and unit outward normal vectors $\{\bm{n}_i^t\}_{i=1}^N$ on edges are defined straightforwardly.
Using $\{\bm{t}_i^t\}_{i=1}^N$ and $\{\bm{n}_i^t\}_{i=1}^N$, we define unit tangent vectors $\{\bm{T}_i^t\}_{i=1}^N$ and unit outward normal vectors $\{\bm{T}_i^t\}_{i=1}^N$ on vertices.
Though the velocities $\{v_i^t\}_{i=1}^N$ on edges need to be set for each problem, we give the relation between the velocities $\{V_i^t\}_{i=1}^N$ on vertices and $\{v_i^t\}_{i=1}^N$ on edges.
Furthermore, we define the curvatures $\{\kappa_i^t\}_{i=1}^N$ on edges.
After giving these definitions, we show a discrete analogue of Theorem \ref{thm:dtdL_dtdA_dtdG}.

%	There are several possibilities on the definitions of the unit tangent vectors $\bm{T}_i^t$, the unit outward normal vectors $\bm{N}_i^t$, and the curvatures on edges of the polygonal curve $\Gamma^t$. 

We introduce a discrete version of the differential operator \eqref{eq:arclength_parameter_derivative_rotation}.
The difference operator $\partial_{\Gamma,i}$ that operates on a quantity $\{\mathsf{F}_i\}_{i=1}^N$ on vertices is defined by
\begin{equation}
	\partial_{\Gamma,i}\mathsf{F}
	:=
	\frac{\mathsf{F}_i-\mathsf{F}_{i-1}}{r_i},
	\quad
	i=1,2,\ldots,N,
\end{equation}
where $r_i=|\bm{X}_i-\bm{X}_{i-1}|$ denotes the length of the $i$th edge $\Gamma_i$.
Then, the unit tangent vector $\bm{t}_i^t$ and the unit outward normal vector $\bm{n}_i^t$ on the $i$th edge are defined straightforwardly by
\begin{equation}
	\bm{t}_i^t=\partial_{\Gamma^t,i}\bm{X}^t,
	\quad
	\bm{n}_i^t=-J\bm{t}_i^t,
	\quad
	i=1,2,\ldots,N.
	\label{def:semid_t}
\end{equation}
We next introduce the average operator $\av_{\Gamma,i}$ that operates on a quantity $\{\mathsf{f}_i \}_{i=1}^N$ on edges.
This operator is defined by
\begin{equation}
	\av_{\Gamma,i}\mathsf{f}
	=
	\frac{\mathsf{f}_i+\mathsf{f}_{i+1}}{2\cos(\varphi_i/2)},
	\quad
	i=1,2,\ldots,N,
\end{equation}
where $\varphi_i=\theta_{i+1}-\theta_i$, and $\theta_i$ denotes the $i$th tangent angle of $\Gamma$: $\bm{t}_i=(\cos\theta_i,\sin\theta_i)^{\mathrm{T}}$ (see Figure~\ref{fig:T-N_polygonal_curve}).
Using the average operator, we define the unit tangent vector $\bm{T}_i^t$ and the unit outward normal vector $\bm{N}_i^t$ at the $i$th vertex $\bm{X}_i^t$ by
\begin{equation}
	\bm{T}_i^t
	:=
	\av_{\Gamma^t,i}\bm{t}^t
	=
	\begin{pmatrix}
			\cos(\theta_i^t+\varphi_i^t/2)\\
			\sin(\theta_i^t+\varphi_i^t/2)
	\end{pmatrix},
	\quad
	\bm{N}_i^t
	:=
	-J\bm{T}_i^t,
	\quad
	i=1,2,\ldots,N.
	\label{eq:tangent_normal_pmbp}\end{equation}
For more details, see, e.g.~\cite{SY19}.
We readily obtain the following lemma.

\begin{figure}[tb]
\centering
\begin{tikzpicture}
\coordinate (O0) at (1.5,0);
\node[circle,fill,scale=0.5,label=left:{$\bm{X}_{i-1}$}] (O1) at (0,0) {};
\node[circle,fill,scale=0.5,label={[xshift=-0.3cm, yshift=-0.6cm]$\bm{X}_i$}] (O2) at (-1,3) {};
\node[circle,fill,scale=0.5,label=below:{$\bm{X}_{i+1}$}] (O3) at (-6,4) {};
\coordinate (O4) at (0.1,4.1);
\coordinate (O5) at (-1.4,4.2);
%\coordinate (O6) at (-1.9,4.1);
    \tkzInCenter(O2,O3,O5)
    \tkzGetPoint{O6}
%\coordinate (O6) at ($5*(O3)!.5!(O5)$);
\coordinate (O7) at ($(O2)!0.4!-90:(O6)$) ;
\draw[dotted,thick] (O0) -- (O1);
\draw[thick] (O1) -- (O2) -- (O3);
%\draw[very thick,->] (O2) -- ($(O2)!1.2cm!(O4)$) node[right] {$\bm{N}_i$};
%\draw[thick,-{latex[scale=3.0]}] (O2) -- (O4) node[right] {$\bm{N}_i$};
\draw[dotted,thick] (O2) -- (O5);
\draw[very thick,->] (O2) --  ($(O2)!1.2cm!(O6)$) node[above] {$\bm{T}_i$};
\draw[very thick,->] (O2) --  ($(O2)!1.2cm!(O7)$) node[above] {$\bm{N}_i$};
\draw
    pic[draw, ->,thick, angle eccentricity=1.4, angle radius=0.8cm]
    {angle=O5--O2--O3};
\draw
    pic["$\varphi_i$", angle eccentricity=1.4, angle radius=0.8cm]
    {angle=O6--O2--O3};
\draw
    pic["$\theta_i$", draw, ->,thick, angle eccentricity=1.4, angle radius=0.5cm]
    {angle=O0--O1--O2};
\draw
    pic["$-\frac{\pi}{2}$", draw, <-,thick, angle eccentricity=1.4, angle radius=0.6cm]
    {angle=O7--O2--O6};
\coordinate (a1) at (0,1);
\coordinate (a2) at (1.15,1.35);
\coordinate (a3) at (-0.35,2.15);
\draw[thick,->] (a1) -- ($(a1)!1.2cm!(a2)$) node[right] {$\bm{n}_i$};
\draw[thick,->] (a1) -- ($(a1)!1.2cm!(a3)$) node[right] {$\bm{t}_i$};
\coordinate (b1) at (-3,3.7);
\coordinate (b2) at (-2.8,4.7);
\coordinate (b3) at (-4,3.9);
\draw[thick,->] (b1) -- ($(b1)!1.2cm!(b2)$) node[right] {$\bm{n}_{i+1}$};
\draw[thick,->] (b1) -- ($(b1)!1.2cm!(b3)$) node[above] {$\bm{t}_{i+1}$};
\end{tikzpicture}
	    \caption{Definitions of the unit tangent and unit outward normal vectors at vertices.}
	    \label{fig:T-N_polygonal_curve}
\end{figure}
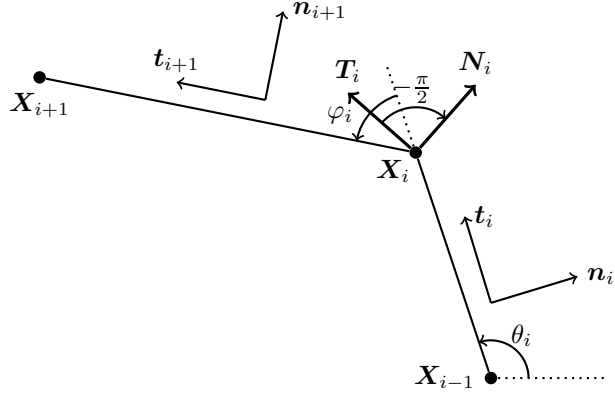
	
\begin{lem}
\label{lem:T_N_t_n}
We have
\begin{alignat*}{2}
	&\bm{t}_i^t=\cos(\varphi_i^t/2)\bm{T}_i^t+\sin(\varphi_i^t/2)\bm{N}_i^t,
	&\quad
	& \bm{t}_{i+1}^t=\cos(\varphi_i^t/2)\bm{T}_i^t-\sin(\varphi_i^t/2)\bm{N}_i^t,\\
	&\bm{n}_i^t=-\sin(\varphi_i^t/2)\bm{T}_i^t+\cos(\varphi_i^t/2)\bm{N}_i^t,
	& \quad
	&\bm{n}_{i+1}^t=\sin(\varphi_i^t/2)\bm{T}_i^t+\cos(\varphi_i^t/2)\bm{N}_i^t.
\end{alignat*}
\end{lem}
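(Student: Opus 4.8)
The plan is to reduce all four identities to the two tangent identities and to obtain the two normal identities for free by applying the rotation $-J$. First I would introduce the shorthands $\psi := \varphi_i^t/2$ and $\alpha := \theta_i^t + \varphi_i^t/2$. Since $\varphi_i^t = \theta_{i+1}^t - \theta_i^t$, the tangent angles of the two edges meeting at $\bm{X}_i^t$ become $\theta_i^t = \alpha - \psi$ and $\theta_{i+1}^t = \alpha + \psi$. With this notation the definitions \eqref{def:semid_t} and \eqref{eq:tangent_normal_pmbp} read $\bm{T}_i^t = (\cos\alpha, \sin\alpha)^{\mathrm{T}}$ and $\bm{N}_i^t = -J\bm{T}_i^t = (\sin\alpha, -\cos\alpha)^{\mathrm{T}}$, while $\bm{t}_i^t = (\cos(\alpha-\psi), \sin(\alpha-\psi))^{\mathrm{T}}$ and $\bm{t}_{i+1}^t = (\cos(\alpha+\psi), \sin(\alpha+\psi))^{\mathrm{T}}$. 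Thus the whole statement is just the assertion that $\bm{t}_i^t$ and $\bm{t}_{i+1}^t$ are the frame $\{\bm{T}_i^t, \bm{N}_i^t\}$ rotated by $\mp\psi$.

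Second, I would expand the components of $\bm{t}_i^t$ by the angle-subtraction formulae, obtaining $(\cos\psi\cos\alpha + \sin\psi\sin\alpha,\ \cos\psi\sin\alpha - \sin\psi\cos\alpha)^{\mathrm{T}}$. Recognizing $(\cos\alpha,\sin\alpha)^{\mathrm{T}} = \bm{T}_i^t$ and $(\sin\alpha,-\cos\alpha)^{\mathrm{T}} = \bm{N}_i^t$ immediately gives $\bm{t}_i^t = \cos\psi\,\bm{T}_i^t + \sin\psi\,\bm{N}_i^t$, and the same expansion with $+\psi$ yields $\bm{t}_{i+1}^t = \cos\psi\,\bm{T}_i^t - \sin\psi\,\bm{N}_i^t$. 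These are the two tangent identities in the top row of the lemma.

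Third, rather than repeating the trigonometry for the normals, I would apply the linear map $-J$ to the two identities just proved, using $\bm{n}_i^t = -J\bm{t}_i^t$ together with $\bm{N}_i^t = -J\bm{T}_i^t$. The single extra fact needed is $(-J)\bm{N}_i^t = J^2\bm{T}_i^t = -\bm{T}_i^t$, since $J^2 = -I$ for the identity matrix $I$. Applying $-J$ to $\bm{t}_i^t = \cos\psi\,\bm{T}_i^t + \sin\psi\,\bm{N}_i^t$ then produces $\bm{n}_i^t = -\sin\psi\,\bm{T}_i^t + \cos\psi\,\bm{N}_i^t$, and the same step applied to $\bm{t}_{i+1}^t$ gives $\bm{n}_{i+1}^t = \sin\psi\,\bm{T}_i^t + \cos\psi\,\bm{N}_i^t$.

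The computation is entirely routine, so there is no genuine analytic obstacle; the only place demanding care is the orientation bookkeeping, namely that $-J$ acts as rotation by $-\pi/2$ and that $J^2 = -I$, which is exactly what fixes the signs in front of $\sin\psi$ and ensures that the four identities come out with the stated signs.
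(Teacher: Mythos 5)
Your proof is correct and follows exactly the route the paper implicitly takes: the paper states the lemma as an immediate consequence of the definitions \eqref{def:semid_t} and \eqref{eq:tangent_normal_pmbp} ("We readily obtain the following lemma"), and your angle-sum expansion with $\alpha=\theta_i^t+\varphi_i^t/2$, $\psi=\varphi_i^t/2$ is precisely that omitted routine computation. Your device of deriving the two normal identities by applying $-J$ (using $J^2=-I$) is a clean way to halve the trigonometry, and all signs check out against the convention $J(\cos\alpha,\sin\alpha)^{\mathrm{T}}=(-\sin\alpha,\cos\alpha)^{\mathrm{T}}$.
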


Below, we define a discrete curvature $\kappa_i^t$ and show how the length and enclosed area evolve.	
The  length $\mathcal{L}[\Gamma]$ of a polygonal curve $\Gamma$ is naturally defined by
\begin{equation}
	\mathcal{L}[\Gamma]
	=
	\sum_{i=1}^Nr_i.
\end{equation}
It follows from the semi-discrete evolution law \eqref{eq:polygonal_evolution_law} and Lemma \ref{lem:T_N_t_n}  that
\begin{align}
	\partial_t\mathcal{L}[\Gamma^t]
	&=
	\sum_{i=1}^N\partial_tr_i^t
	=
	\sum_{i=1}^N\partial_{\Gamma^t,i}\bm{X}^t\cdot\partial_t(\bm{X}_i^t-\bm{X}_{i-1}^t)\\
	&=
	\sum_{i=1}^N(\bm{t}_i^t-\bm{t}_{i+1}^t)\cdot(V_i^t\bm{N}_i^t+W_i^t\bm{T}_i^t)
	=
	2\sum_{i=1}^N\sin\frac{\varphi_i^t}{2}V_i^t.
\end{align}
If we define the relation between the normal velocities on edges and vertices by
\begin{align}
	V_i^t=\av_{\Gamma^t,i}v^t,
	\quad
	i=1,2,\ldots,N,
	\label{eq:relation_normal_velocity}
\end{align}
the time derivative of the  length can be expressed as	
\begin{equation}
	\partial_t\mathcal{L}[\Gamma^t]
	=
	2\sum_{i=1}^N\sin\frac{\varphi_i^t}{2}\av_{\Gamma^t,i}v^t
	=
	\sum_{i=1}^N\frac{\tan(\varphi_i^t/2)+\tan(\varphi_{i-1}^t/2)}{r_i^t}v_i^tr_i^t.
\end{equation}
Comparing this expression with the first relation in \eqref{eq:dtdL_dtdA_dtdG}, we define a discrete curvature $\kappa_i^t$ on the $i$th edge $\Gamma_i^t$ by
\begin{equation}
	\kappa_i^t
	:=
	\frac{\tan(\varphi_i^t/2)+\tan(\varphi_{i-1}^t/2)}{r_i^t},
	\quad
	i=1,2,\ldots,N.
	\label{eq:discrete_curvature}
\end{equation}
The next theorem, which is a discrete analogue of Theorem \ref{thm:dtdL_dtdA_dtdG}, characterizes the evolution of the  length and enclosed area in terms of the discrete curvature and normal velocity.
	
\begin{thm}
\label{thm:dtdL_dtdA_pmbp}
%Suppose that a polygonal curve $\Gamma^t$ evolves according to the polygonal evolution law \eqref{eq:polygonal_evolution_law}.
%Then, we have the following relations:
For the polygonal curve $\Gamma^t$ that evolves according to the semi-discrete evolution law \eqref{eq:polygonal_evolution_law}, it follows that
\begin{equation}
	\partial_t\mathcal{L}[\Gamma^t]=\sum_{i=1}^N\kappa_i^tv_i^tr_i^t,
	\quad
	\partial_t\mathcal{A}[\Omega^t]=\sum_{i=1}^Nv_i^tr_i^t+\mathrm{err}_{\mathcal{A}[\Omega^t]},
	\label{eq:dtdL_dtdA_pmbp}
\end{equation}
where $\Omega^t$ represents the polygonal region bounded by $\Gamma^t$, and the error term $\mathrm{err}_{\mathcal{A}[\Omega^t]}$ is given by
\begin{equation}
	\mathrm{err}_{\mathcal{A}[\Omega^t]}
	=
	\sum_{i=1}^N\left(W_i^t\sin\frac{\varphi_i^t}{2}-\frac{v_{i+1}^t-v_i^t}{2}\right)\frac{r_{i+1}^t-r_i^t}{2}.
\end{equation}
\end{thm}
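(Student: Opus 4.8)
The length identity needs essentially no new computation. The derivation carried out immediately before the statement already gives
\[
\partial_t\mathcal{L}[\Gamma^t]
=\sum_{i=1}^N\frac{\tan(\varphi_i^t/2)+\tan(\varphi_{i-1}^t/2)}{r_i^t}\,v_i^t r_i^t,
\]
so inserting the definition \eqref{eq:discrete_curvature} of the discrete curvature $\kappa_i^t$ yields the first relation in \eqref{eq:dtdL_dtdA_pmbp} at once. The whole content of the theorem therefore lies in the area identity, and the plan is to mimic the continuous computation from the proof of Theorem~\ref{thm:dtdL_dtdA_dtdG}, carefully tracking the discrepancy that the discreteness introduces.

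For the area I would start from the shoelace representation
\[
\mathcal{A}[\Omega^t]=\frac12\sum_{i=1}^N\bm{X}_{i-1}^t\times\bm{X}_i^t,
\qquad\bm{a}\times\bm{b}:=a_1b_2-a_2b_1,
\]
which is the polygonal analogue of $\frac12\int_0^1\bm{x}^t\cdot\bm{n}^t\,|\partial_u\bm{x}^t|\,\mathrm{d}u$ used in the smooth case. Differentiating in $t$, exploiting the bilinearity and antisymmetry of the cross product, and shifting the index in one of the two resulting sums collapses everything to
\[
\partial_t\mathcal{A}[\Omega^t]=\frac12\sum_{i=1}^N\partial_t\bm{X}_i^t\times(\bm{X}_{i+1}^t-\bm{X}_{i-1}^t).
\]
This is exactly the discrete counterpart of the integration-by-parts step in the smooth proof: the central difference $\bm{X}_{i+1}^t-\bm{X}_{i-1}^t$ plays the role of the derivative of $\bm{x}^t$.

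Next I would transfer to the vertex frame $\{\bm{T}_i^t,\bm{N}_i^t\}$. Since $\bm{X}_{i+1}^t-\bm{X}_{i-1}^t=r_{i+1}^t\bm{t}_{i+1}^t+r_i^t\bm{t}_i^t$, Lemma~\ref{lem:T_N_t_n} rewrites this as $(r_i^t+r_{i+1}^t)\cos(\varphi_i^t/2)\,\bm{T}_i^t+(r_i^t-r_{i+1}^t)\sin(\varphi_i^t/2)\,\bm{N}_i^t$. Substituting $\partial_t\bm{X}_i^t=V_i^t\bm{N}_i^t+W_i^t\bm{T}_i^t$ and using the orthonormality relations $\bm{N}_i^t\times\bm{T}_i^t=1$ and $\bm{T}_i^t\times\bm{N}_i^t=-1$ reduces each summand to $V_i^t(r_i^t+r_{i+1}^t)\cos(\varphi_i^t/2)+W_i^t(r_{i+1}^t-r_i^t)\sin(\varphi_i^t/2)$; the relation \eqref{eq:relation_normal_velocity}, i.e.\ $V_i^t\cos(\varphi_i^t/2)=(v_i^t+v_{i+1}^t)/2$, then turns the normal part into $\tfrac14\sum_i(v_i^t+v_{i+1}^t)(r_i^t+r_{i+1}^t)$, while the tangential part is already $\sum_i W_i^t\sin(\varphi_i^t/2)\,\frac{r_{i+1}^t-r_i^t}{2}$.

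The hard part is the concluding algebra that isolates the principal sum $\sum_i v_i^t r_i^t$ from the error. The key is the exact identity $v_i r_{i+1}+v_{i+1}r_i=v_i r_i+v_{i+1}r_{i+1}-(v_{i+1}-v_i)(r_{i+1}-r_i)$, which measures precisely the failure of a naive discrete Leibniz rule and is the genuine source of $\mathrm{err}_{\mathcal{A}[\Omega^t]}$. Applying it, together with the reindexing $\sum_i v_{i+1}^tr_{i+1}^t=\sum_i v_i^tr_i^t$, rewrites the normal part as $\sum_i v_i^t r_i^t-\sum_i\frac{v_{i+1}^t-v_i^t}{2}\cdot\frac{r_{i+1}^t-r_i^t}{2}$. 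Adding back the tangential part collects the two correction sums into $\sum_i\bigl(W_i^t\sin(\varphi_i^t/2)-\frac{v_{i+1}^t-v_i^t}{2}\bigr)\frac{r_{i+1}^t-r_i^t}{2}$, which is exactly the claimed error term and completes the proof.
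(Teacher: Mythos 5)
Your proof is correct and takes essentially the same route as the paper's: your shoelace representation coincides with the paper's $\tfrac12\sum_{i=1}^N J\bm{X}_{i-1}\cdot\bm{X}_i$ (since $\bm{a}\times\bm{b}=J\bm{a}\cdot\bm{b}$), and the differentiate-and-reindex step, the passage to the vertex frame via Lemma~\ref{lem:T_N_t_n}, the use of \eqref{eq:relation_normal_velocity}, and the final regrouping into $\sum_i v_i^tr_i^t$ plus the error term all match the paper's computation step for step. The only difference is cosmetic: you spell out the concluding product identity $v_ir_{i+1}+v_{i+1}r_i=v_ir_i+v_{i+1}r_{i+1}-(v_{i+1}-v_i)(r_{i+1}-r_i)$ explicitly, which the paper compresses into its final displayed line.
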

	
\begin{proof}
The relation for the  length $\mathcal{L}[\Gamma^t]$ is an immediate consequence of the definition of the discrete curvature \eqref{eq:discrete_curvature}.
The relation for the enclosed area $\mathcal{A}[\Omega^t]$ first appeared in \cite[Proposition 4]{SY19} without proof; therefore, we give its proof.

The  enclosed area $\mathcal{A}[\Omega]$ can be expressed as
\begin{equation}
	\mathcal{A}[\Omega]
	=
	\int_\Omega\,\mathrm{d}\bm{x}
	=
	\frac{1}{2}\sum_{i=1}^N(\bm{X}_i\cdot\bm{n}_i)r_i
	=
	\frac{1}{2}\sum_{i=1}^NJ\bm{X}_{i-1}\cdot\bm{X}_i.
\end{equation}
Thus, using the semi-discrete evolution law \eqref{eq:polygonal_evolution_law}, Lemma \ref{lem:T_N_t_n} and the definition of $V_i^t$ \eqref{eq:relation_normal_velocity}, we have
\begin{align}
	\partial_t\mathcal{A}[\Omega^t]
	&=
	\frac{1}{2}\sum_{i=1}^N\left(J\partial_t\bm{X}_{i-1}^t\cdot\bm{X}_i^t+J\bm{X}_{i-1}^t\cdot\partial_t\bm{X}_i^t\right)\\
	&=
	\frac{1}{2}\sum_{i=1}^NJ(\bm{X}_{i-1}^t-\bm{X}_{i+1}^t)\cdot\partial_t\bm{X}_i^t\\
	&=
	\frac{1}{2}\sum_{i=1}^N(r_{i+1}^t\bm{n}_{i+1}^t+r_i^t\bm{n}_i^t)\cdot(V_i^t\bm{N}_i^t+W_i^t\bm{T}_i^t)\\
	&=
	\frac{1}{4}\sum_{i=1}^N(v_i^t+v_{i+1}^t)(r_i^t+r_{i+1}^t)+\sum_{i=1}^NW_i\sin\frac{\varphi_i^t}{2}\frac{r_{i+1}^t-r_i^t}{2}\\
	&=
	\sum_{i=1}^Nv_i^tr_i^t+\sum_{i=1}^N\left(W_i\sin\frac{\varphi_i^t}{2}-\frac{v_{i+1}^t-v_i^t}{2}\right)\frac{r_{i+1}^t-r_i^t}{2}.
	\label{par_A_Omega}
\end{align}
\end{proof}

It is hoped that the error term $\mathrm{err}_{\mathcal{A}[\Omega^t]}$ is as close as possible to $0$.
We will explain in the next subsection that the term can be controlled by manipulating the tangential velocity.
	
\begin{rem} %\memo{どこに書くのがよい？}
\label{rem:N}
Note that the tangent and normal vectors on vertices were defined by
$\bm{T}_i^t = \av_{\Gamma^t,i}\bm{t}^t$ and $\bm{N}_i^t = \av_{\Gamma^t,i}\bm{n}^t$, respectively.
The definitions are also understood in the following way.
The normal vector $\bm{N}_i^t$ is parallel to $\bm{t}_i^t-\bm{t}_{i+1}^t$, and the tangent vector $\bm{T}_i^t$ is defined so that it is perpendicular to $\bm{N}_i^t$.
\end{rem}
	
\subsection{Tangential velocity for semi-discrete moving boundary problem}
\label{subsec:tangential_velocity_pmbp}

As explained in Section~\ref{sec:introduction}, there exist several approaches to setting appropriate discrete tangential velocities.
Among them, we here employ the asymptotic uniform distribution method~\cite{SY13}, which defines discrete tangential velocities so that the arrangement of the vertices is made uniform as $t\to\infty$. 

Given real numbers $\eta_i$ ($i=1,\dots,N$) satisfying $\sum_{i=1}^N\eta_i=0$ and given function $f^t(N)$ satisfying $\lim_{t\to \infty}f^t(N)=\infty$, we require that the curve $\Gamma^t$ satisfies
\begin{equation}
	r_i^t-\frac{\mathcal{L}[\Gamma^t]}{N}
	=
	\eta_i\exp(-f^t(N)),
	\quad
	i=1,2,\ldots,N.
	\label{eq:criterion_UDM}
\end{equation}
Differentiating \eqref{eq:criterion_UDM} gives
\begin{equation}
	\partial_tr_i^t-\frac{\partial_t\mathcal{L}[\Gamma^t]}{N}
	=
	\left(\frac{\mathcal{L}[\Gamma^t]}{N}-r_i^t\right)\omega^t(N),
	\label{eq:UDM_1}
\end{equation}
where $\omega^t(N)=\partial_tf^t(N)$, and we call $\omega^t(N)$ a relaxation parameter.
On the other hand, under the semi-discrete evolution law  \eqref{eq:polygonal_evolution_law} the time derivative of the length of the $i$th edge is calculated to be
\begin{align}
	\partial_tr_i^t
	&=
	\partial_{\Gamma^t,i}\bm{X}^t\cdot\partial_t(\bm{X}_i^t-\bm{X}_{i-1}^t)\\
	&=
	V_i^t\sin\frac{\varphi_i^t}{2}+V_{i-1}^t\sin\frac{\varphi_{i-1}^t}{2}+W_i^t\cos\frac{\varphi_i^t}{2}-W_{i-1}^t\cos\frac{\varphi_{i-1}^t}{2}.
	\label{eq:UDM_2}
\end{align}
In order for \eqref{eq:UDM_1} and \eqref{eq:UDM_2} to be compatible,
\begin{multline}
	W_i^t\cos\frac{\varphi_i^t}{2}-W_{i-1}^t\cos\frac{\varphi_{i-1}^t}{2}\\
	=
	-V_i^t\sin\frac{\varphi_i^t}{2}-V_{i-1}^t\sin\frac{\varphi_{i-1}^t}{2}+\frac{\partial_t\mathcal{L}[\Gamma^t]}{N}+\left(\frac{\mathcal{L}[\Gamma^t]}{N}-r_i^t\right)\omega^t(N)
	\label{cond:Wit}
\end{multline}
must be satisfied for $i=2,3,\ldots,N$.
Here, because summing up the condition \eqref{cond:Wit} for $i=2,3,\ldots,N$ gives the same condition for $i=1$, it is sufficient to consider $i=2,3,\ldots,N$.
The asymptotic uniform distribution method defines the tangential velocities so that they satisfy  the condition \eqref{cond:Wit}, but we note that the tangential velocities $\{W_i^t\}_{i=1}^N$ are still underdetermined under the condition \eqref{cond:Wit} for $i=2,3,\ldots,N$.
To determine the tangential velocities $\{W_i^t\}_{i=1}^N$ uniquely, we need to impose an additional condition.
For example, requiring $ \sum_{i=1}^NW_i^t(r_i^t+r_{i+1}^t)/2=0$ gives the tangential velocities $\{W_i^t\}_{i=1}^N$ explicitly:
\begin{align}
	W_1^t
	&=
	-\frac{\sum_{i=2}^N\Psi_i^t(r_i^t+r_{i+1}^t)/(2\cos(\varphi_i^t/2))}{\cos(\varphi_1^t/2)\sum_{i=1}^N(r_i^t+r_{i+1}^t)/(2\cos(\varphi_i^t/2))}, 
	\\
	W_i^t
	&=
	\frac{\Psi_i^t+W_1^t\cos(\varphi_1^t/2)}{\cos(\varphi_i^t/2)}, \quad i=2,3,\ldots,N,
\end{align}
where
\begin{align}
	\Psi_i^t
	&
	=\sum_{l=2}^i\psi_i^t,\\
	\psi_i^t
	&=
	-V_i^t\sin\frac{\varphi_i^t}{2}-V_{i-1}^t\sin\frac{\varphi_{i-1}^t}{2}+\frac{\partial_t\mathcal{L}[\Gamma^t]}{N}+\left(\frac{\mathcal{L}[\Gamma^t]}{N}-r_i^t\right)\omega^t(N)
\end{align}
for $i=2,3,\ldots,N$.
	
The asymptotic uniform distribution method makes the positions of the vertices uniform as $t\to \infty$.
In particular, if the position of the initial vertices $\{\bm{X}_i^0\}_{i=1}^N$ is uniform, that is, $r_i^0\equiv\mathcal{L}[\Gamma^0]/N$ holds for all $i$, then the method keeps the uniformity along the time evolution:
$r_i^t\equiv\mathcal{L}[\Gamma^t]/N$ holds for all $i$ and all $t$.
The following is an immediate consequence of the above discussion, and can be seen as a  discrete analogue of Theorem \ref{thm:dtdL_dtdA_dtdG}.
	
\begin{cor}
\label{cor:dtdL_dtdA_pbmp_audm}
Suppose that the initial vertices are allocated uniformly, i.e. $r_i^0=\text{const.}$ and that the tangential velocities $\{W_i^t\}_{i=1}^N$ are set such that they satisfy \eqref{cond:Wit}.
Then, for the polygonal curve $\Gamma^t$ that evolves according to the semi-discrete evolution law \eqref{eq:polygonal_evolution_law}, it follows that
%	    In addition to the hypothesis in Theorem \ref{thm:dtdL_dtdA_pmbp}, suppose that
%	    the initial vertices are allocated uniformly, i.e. $r_i^0=\text{const.}$ and that the tangential velocities $\{W_i^t\}_{i=1}^N$ are set so that they satisfy \eqref{cond:Wit}.
%	    Then, we have
\begin{equation}
	\partial_t\mathcal{L}[\Gamma^t]
	=
	\sum_{i=1}^N\kappa_i^tv_i^tr_i^t,
	\quad
	\partial_t\mathcal{A}[\Omega^t]
	=
	\sum_{i=1}^Nv_i^tr_i^t.
\end{equation}
\end{cor}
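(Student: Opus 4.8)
The plan is to build directly on Theorem~\ref{thm:dtdL_dtdA_pmbp}, which already delivers the exact identity $\partial_t\mathcal{L}[\Gamma^t]=\sum_{i=1}^N\kappa_i^tv_i^tr_i^t$ together with $\partial_t\mathcal{A}[\Omega^t]=\sum_{i=1}^Nv_i^tr_i^t+\mathrm{err}_{\mathcal{A}[\Omega^t]}$ and the explicit form of the error term. The length identity therefore requires no further argument, and the whole task collapses to showing that under the stated hypotheses $\mathrm{err}_{\mathcal{A}[\Omega^t]}=0$.

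Inspecting the error term, every summand carries the factor $(r_{i+1}^t-r_i^t)/2$; hence it suffices to prove that the edge lengths remain uniform, i.e.\ $r_i^t=\mathcal{L}[\Gamma^t]/N$ for all $i$ and all $t$, which forces $r_{i+1}^t-r_i^t=0$. So the real content of the corollary is the propagation of uniformity from the initial time to all later times.

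First I would introduce the deviation $\rho_i^t:=r_i^t-\mathcal{L}[\Gamma^t]/N$. By construction, the condition \eqref{cond:Wit} imposed on the tangential velocities is precisely what makes the actual rate of change \eqref{eq:UDM_2} of each edge length agree with the prescribed rate \eqref{eq:UDM_1}; thus \eqref{eq:UDM_1} holds, namely $\partial_t r_i^t-\partial_t\mathcal{L}[\Gamma^t]/N=(\mathcal{L}[\Gamma^t]/N-r_i^t)\omega^t(N)$. Rewritten in terms of $\rho_i^t$, this becomes the homogeneous scalar linear ODE $\partial_t\rho_i^t=-\omega^t(N)\,\rho_i^t$ for each $i$. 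Since the initial vertices are uniform we have $\rho_i^0=0$, and uniqueness for this linear ODE forces $\rho_i^t\equiv 0$; equivalently $\rho_i^t=\rho_i^0\exp(-(f^t(N)-f^0(N)))=0$, in agreement with criterion \eqref{eq:criterion_UDM} with $\eta_i=0$. Consequently $r_i^t=\mathcal{L}[\Gamma^t]/N$ for all $i$ and $t$, each factor $(r_{i+1}^t-r_i^t)/2$ vanishes, $\mathrm{err}_{\mathcal{A}[\Omega^t]}=0$, and the desired area identity follows.

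The step I expect to be the main obstacle is the conceptual one in the third paragraph: recognizing that \eqref{cond:Wit} is exactly the ingredient turning the deviation dynamics into a \emph{decoupled homogeneous} linear ODE, so that uniform initial data propagate to uniform data for all time. Once this is seen, everything else is a direct substitution into the error term from Theorem~\ref{thm:dtdL_dtdA_pmbp}, with no genuine computation remaining.
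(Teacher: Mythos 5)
Your proof is correct and follows essentially the same route as the paper: the paper presents the corollary as an immediate consequence of Theorem~\ref{thm:dtdL_dtdA_pmbp} together with the preceding observation that the asymptotic uniform distribution method propagates uniformity ($r_i^t\equiv\mathcal{L}[\Gamma^t]/N$, i.e.\ \eqref{eq:criterion_UDM} with $\eta_i=0$), which kills every factor $(r_{i+1}^t-r_i^t)/2$ in $\mathrm{err}_{\mathcal{A}[\Omega^t]}$. Your explicit solution of the homogeneous linear ODE $\partial_t\rho_i^t=-\omega^t(N)\rho_i^t$ for the deviation is exactly the content the paper leaves implicit (and the $i=1$ case of \eqref{cond:Wit}, which you use tacitly, is covered either by the paper's summation remark or by the constraint $\sum_i\rho_i^t=0$).
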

	
\subsection{Examples}
We consider discretizing the mean curvature flow, area-preserving mean curvature flow and Hele-Shaw flow.
In this subsection, we suppose that the initial vertices are arranged uniformly and that the tangential velocities are set based on the asymptotic uniform distribution method.
Note that for each problem, specifying the normal velocity on edges determines the dynamics of the semi-discrete flow.
	
\begin{ex}[semi-discrete polygonal mean curvature flow]
\label{ex:mean_curvature_flow_pmbp}
The normal velocity of the semi-discrete polygonal mean curvature flow is given by
\begin{equation}
	v_i^t=-\kappa_i^t, \quad i=1,2,\ldots,N,\ t>0.
\end{equation}
We then have the curve-shortening property:
\begin{equation}
	\partial_t\mathcal{L}[\Gamma^t]
	=
	-\sum_{i=1}^N(\kappa_i^t)^2r_i^t
	<
	0.
\end{equation}
Moreover, we have
\begin{equation}
	\partial_t\mathcal{A}[\Omega^t]
	=
	-2\sum_{i=1}^N\tan\frac{\varphi_i^t}{2}.
	\label{der_A_semi}
\end{equation}
We note that $\sum_{i=1}^N\tan(\varphi_i^t/2)-\pi=\mathrm{O}(N^{-2})$ as $N\to\infty$ since $\tan(\varphi_i^t/2)-\varphi_i^t/2=\mathrm{O}(N^{-3})$, and thus $\partial_t\mathcal{A}[\Omega^t] + 2\pi = \mathrm{O}(N^{-2})$.
Therefore,  \eqref{der_A_semi} can be regarded as a discrete counterpart of \eqref{eq:MCF_area_speed}.
\end{ex}
	
\begin{ex}[semi-discrete polygonal area-preserving mean curvature flow]
\label{ex:area-preserving_mean_curvature_flow_pmbp}
The normal velocity of the semi-discrete polygonal area-preserving mean curvature flow is given by
\begin{equation}
	v_i^t=-\kappa_i^t+\langle\kappa^t\rangle_{\Gamma^t},
	\quad
	i=1,2,\ldots,N,\ t>0,
\end{equation}
where $\langle\mathsf{f}\rangle_\Gamma$ denotes the average of $\{\mathsf{f}_i\}_{i=1}^N$ along the curve $\Gamma$:
\begin{equation}
	\langle\mathsf{f}\rangle_\Gamma
	=
	\frac{1}{\mathcal{L}[\Gamma]}\sum_{i=1}^N\mathsf{f}_ir_i.
\end{equation}
We then have the curve-shortening property: 
\begin{align}
	\partial_t\mathcal{L}[\Gamma^t]
	&=
	\sum_{i=1}^N\kappa_i^t\left(-\kappa_i^t+\langle\kappa^t\rangle_{\Gamma^t}\right)r_i^t
	=
	-\sum_{i=1}^N(\kappa_i^t)^2r_i^t+\frac{1}{\mathcal{L}[\Gamma^t]}\left(\sum_{i=1}^N\kappa_i^tr_i^t\right)^2\\
	&=
	\frac{1}{\mathcal{L}[\Gamma^t]}\left[-\sum_{i=1}^Nr_i^t\sum_{i=1}^N(\kappa_i^t)^2r_i^t+\left(\sum_{i=1}^N\kappa_i^tr_i^t\right)^2\right]
	\le
	0
\end{align}
and the area-preserving property:
\begin{equation}
	\partial_t\mathcal{A}[\Omega^t]
	=
	\sum_{i=1}^N\left(-\kappa_i^t+\langle\kappa^t\rangle_{\Gamma^t}\right)r_i^t
	=
	0.
\end{equation}
\end{ex}
	
\begin{ex}[semi-discrete polygonal Hele-Shaw flow]
\label{ex:Hele-Shaw_flow_pmbp}
The normal velocity of the semi-discrete polygonal Hele-Shaw flow is given by
\begin{equation}
	v_i^t=-\nabla p^t\cdot\bm{n}_i^t,
	\quad
	i=1,2,\ldots,N,\ t>0,
\end{equation}
where $p^t$ is the solution to the Laplace equation: 
\begin{equation}
	\begin{dcases*}
		\triangle p^t=0&in $\Omega^t$,\\
	    p^t=\sigma\kappa_i^t&on $\Gamma_i^t$, $i=1,2,\ldots,N$,\\
		\nabla p^t\cdot\bm{n}_i^t\equiv\text{cosnt.}&on $\Gamma_i^t$, $i=1,2,\ldots,N$.
	\end{dcases*}
\end{equation}
We then have the curve-shortening property:
\begin{align}
	\partial_t\mathcal{L}[\Gamma^t]
	&=
	-\frac{1}{\sigma}\sum_{i=1}^Np^t|_{\Gamma_i^t}\nabla p^t|_{\Gamma_i^t}\cdot\bm{n}_i^tr_i^t
	=
	-\frac{1}{\sigma}\int_{\Gamma^t}p^t\nabla p^t\cdot\bm{n}^t\,\mathrm{d}s\\
	&=
	-\frac{1}{\sigma}\int_{\Omega^t}\nabla\cdot(p^t\nabla p^t)\,\mathrm{d}\bm{x}
	=
	-\frac{1}{\sigma}\int_{\Omega^t}|\nabla p^t|^2\,\mathrm{d}\bm{x}
	\le
	0
\end{align}
and the area-preserving property:
\begin{equation}
	\partial_t\mathcal{A}[\Omega^t]
	=
	-\sum_{i=1}^N\nabla p^t|_{\Gamma_i^t}\cdot\bm{n}_i^tr_i^t
	=
	-\int_{\Gamma^t}\nabla p^t\cdot\bm{n}^t\,\mathrm{d}s
	=
	-\int_{\Omega^t}\nabla\cdot\nabla p^t\,\mathrm{d}\bm{x}
	=
	0,
\end{equation}
where $\bm{n}^t:=\sum_{i=1}^N\bm{n}_i^t\chi_{\Gamma_i^t}$, and $\chi_{\Gamma_i}$ denotes the characteristic function of $\Gamma_i$.
\end{ex}
	
\section{Fully discrete polygonal moving boundary problems}
\label{sec:time-discretized_polygonal_moving_boundary_problem}

In this section, we derive a fully discrete evolution law that inherits the curve-shortening property for the moving boundary problem \eqref{eq:evolution_law}.

\subsection{Fully discrete evolution law}
	
Let us consider a fully discrete evolution law of the form 
\begin{multline}
	\frac{\bm{X}_i^{n+1}-\bm{X}_i^{n}}{\Delta t^{n}}
	=
	V_i(\Gamma^{n+1},\Gamma^{n})\bm{N}_i(\Gamma^{n+1},\Gamma^{n})+W_i(\Gamma^{n+1},\Gamma^{n})\bm{T}_i(\Gamma^{n+1},\Gamma^{n}),\\
	i=1,2,\ldots,N,\ n=0,1,\ldots,
	\label{eq:discretized_polygonal_evolution_law}
\end{multline}
where $\Delta t^{n}$ is the time step size and $\Gamma^n$ denotes the polygonal curve at the $n$th time step $t=t_n$.
Note that the vectors $\bm{T}_i(\Gamma^{n+1},\Gamma^{n})$ and $\bm{N}_i(\Gamma^{n+1},\Gamma^{n})$ and velocities $V_i(\Gamma^{n+1},\Gamma^{n})$ and $W_i(\Gamma^{n+1},\Gamma^{n})$ depend on both $\Gamma^{n}$ and $\Gamma^{n+1}$, and their definitions will be given in the subsequent subsections.

\subsection{Definitions of the unit tangent vector, unit outward normal vector and curvature}
\label{subsec:FDvectors}
To simplify the notation, we simply omit the superscript $n+1$ and use the hat instead of the superscript $n$. The time step size is simply denoted by $\Delta t$.
The evolution law \eqref	{eq:discretized_polygonal_evolution_law} is then expressed as
\begin{equation}
	\frac{\bm{X}_i-\hat{\bm{X}}_i}{\Delta t}
	=
	V_i(\Gamma,\hat{\Gamma})\bm{N}_i(\Gamma,\hat{\Gamma})+W_i(\Gamma,\hat{\Gamma})\bm{T}_i(\Gamma,\hat{\Gamma}),
	\quad
	i=1,2,\ldots,N,
\end{equation}
where
\begin{equation}
	\Gamma
	=
	\bigcup_{i=1}^N\Gamma_i,\ 
	\Gamma_i=(\bm{X}_{i-1},\bm{X}_i),
	\quad
	\hat{\Gamma}
	=
	\bigcup_{i=1}^N\hat{\Gamma}_i,\ 
	\hat{\Gamma}_i=(\hat{\bm{X}}_{i-1},\hat{\bm{X}}_i).
\end{equation}
In this subsection, we define the unit tangent vector $\bm{T}_i(\Gamma,\hat{\Gamma})$, unit outward normal vector $\bm{N}_i(\Gamma,\hat{\Gamma})$, normal velocity $V_i(\Gamma,\hat{\Gamma})$ and curvature.
	
\subsubsection{Unit tangent vector and unit outward normal vector}
The length of the $i$th edge of the polygonal curve $\Gamma$ is denoted by $r_i$, i.e.
\begin{equation}
	r_i=|\bm{X}_i-\bm{X}_{i-1}|,
	\quad
	\hat{r}_i=|\hat{\bm{X}}_i-\hat{\bm{X}}_{i-1}|,
	\quad
	i=1,2,\ldots,N.
\end{equation}
The unit tangent vectors $\bm{t}_i$ on $\Gamma_i$ and $\hat{\bm{t}}_i$ on $\hat{\Gamma}_i$ are defined straightforwardly by
\begin{equation}
	\bm{t}_i=\partial_{\Gamma,i}\bm{X},
	\quad
	\hat{\bm{t}}_i=\partial_{\hat{\Gamma},i}\hat{\bm{X}},
	\quad
	i=1,2,\ldots,N.
\end{equation}
We then have 
\begin{align}
	\frac{\mathcal{L}[\Gamma]-\mathcal{L}[\hat{\Gamma}]}{\Delta t}
	&=
	\sum_{i=1}^N\frac{r_i-\hat{r}_i}{\Delta t}
	=
	\sum_{i=1}^N\frac{r_i\bm{t}_i+\hat{r}_i\hat{\bm{t}}_i}{r_i+\hat{r}_i}\cdot\frac{(\bm{X}_i-\bm{X}_{i-1})-(\hat{\bm{X}}_i-\hat{\bm{X}}_{i-1})}{\Delta t}\\
	&=
	-\sum_{i=1}^N  \left(\frac{r_{i+1}\bm{t}_{i+1}+\hat{r}_{i+1}\hat{\bm{t}}_{i+1}}{r_{i+1}+\hat{r}_{i+1}}-\frac{r_i\bm{t}_i+\hat{r}_i\hat{\bm{t}}_i}{r_i+\hat{r}_i}\right)\cdot\frac{\bm{X}_i-\hat{\bm{X}}_i}{\Delta t}\\
	&=
	-\sum_{i=1}^N  \left(\partial_{\Gamma,\hat{\Gamma},i+1}\overline{\bm{X}}-\partial_{\Gamma,\hat{\Gamma},i}\overline{\bm{X}}\right)\cdot\frac{\bm{X}_i-\hat{\bm{X}}_i}{\Delta t}, 
	\label{fd:length_calc}
\end{align}
where $\overline{\bm{X}}_i:=(\bm{X}_i+\hat{\bm{X}}_i)/2$, and $\partial_{\Gamma,\hat{\Gamma},i}$ denotes the difference operator associated with two curves $\Gamma$ and $\hat{\Gamma}$:
\begin{equation}
	\partial_{\Gamma,\hat{\Gamma},i}\mathsf{F}
	=
	\frac{\mathsf{F}_i-\mathsf{F}_{i-1}}{(r_i+\hat{r}_i)/2},
	\quad
	i=1,2,\ldots,N.
\end{equation}
This operator operates on a function on edges.

Below, we define $\overline{\bm{T}}_i=\bm{T}_i(\Gamma,\hat{\Gamma})$ and $\overline{\bm{N}}_i=\bm{N}_i(\Gamma,\hat{\Gamma})$. 
We note that $\partial_{\Gamma,\hat{\Gamma},i}\overline{\bm{X}}$, which can be regarded as a tangent vector on the $i$th edge, is not necessarily a unit vector; thus it is not straightforward to define $\overline{\bm{T}}_i$ and $\overline{\bm{N}}_i$.
In contrast to the time-continuous cases discussed in Section~\ref{sec:polygonal_moving_boundary_problem}, we start by defining $\overline{\bm{N}}_i$.
Recall that in Section~\ref{sec:polygonal_moving_boundary_problem} the normal vector $\bm{N}_i^t$ can be understood in such a way that it is parallel to $\bm{t}_i^t-\bm{t}_{i+1}^t$ (Remark~\ref{rem:N}).
Motivated by this interpretation, we define $\overline{\bm{N}}_i$ by requiring it to be parallel to $\partial_{\Gamma,\hat{\Gamma},i+1}\overline{\bm{X}}-\partial_{\Gamma,\hat{\Gamma},i}\overline{\bm{X}}$:
\begin{equation}
	\overline{\bm{N}}_i = \bm{N}_i (\Gamma,\hat{\Gamma})
	=
	\begin{dcases*}
		-\sgn\overline{\varphi}_i\frac{\partial_{\Gamma,\hat{\Gamma},i+1}\overline{\bm{X}}-\partial_{\Gamma,\hat{\Gamma},i}\overline{\bm{X}}}{|\partial_{\Gamma,\hat{\Gamma},i+1}\overline{\bm{X}}-\partial_{\Gamma,\hat{\Gamma},i}\overline{\bm{X}}|}&if $\overline{\varphi}_i\neq0$,\\
		-J\overline{\bm{t}}_i&if $\overline{\varphi}_i=0$,
	\end{dcases*}
\end{equation}
where $\overline{\varphi}_i$ denotes the signed angle between two adjacent edges $\overline{\Gamma}_i$ and $\overline{\Gamma}_{i+1}$.
The unit tangent vector $\overline{\bm{T}}_i = \bm{T}_i (\Gamma,\hat{\Gamma})$ is defined such that it is perpendicular to $\overline{\bm{N}}_i$: $\overline{\bm{T}}_i=J\overline{\bm{N}}_i$.

Using $\overline{\bm{N}}_i$ and $\overline{\bm{T}}_i$, we can represent $\partial_{\Gamma,\hat{\Gamma},i+1}\overline{\bm{X}}-\partial_{\Gamma,\hat{\Gamma},i}\overline{\bm{X}}$ as follows:
\begin{equation}
	\partial_{\Gamma,\hat{\Gamma},i+1}\overline{\bm{X}}-\partial_{\Gamma,\hat{\Gamma},i}\overline{\bm{X}}=\overline{F}_i\overline{\bm{N}}_i+\overline{G}_i\overline{\bm{T}}_i,
\end{equation}
where
\begin{align}
	\overline{F}_i
	&=
	\begin{dcases*}
		-\frac{|\partial_{\Gamma,\hat{\Gamma},i+1}\overline{\bm{X}}-\partial_{\Gamma,\hat{\Gamma},i}\overline{\bm{X}}|}{\sgn\overline{\varphi}_i}&if $\overline{\varphi}_i\neq0$,\\
		0&if $\overline{\varphi}_i=0$,
	\end{dcases*}\\
	\overline{G}_i
	&=
	\begin{dcases*}
		0&if $\overline{\varphi}_i\neq0\lor\left(\overline{\varphi}_i=0\land|\partial_{\Gamma,\hat{\Gamma},i+1}\overline{\bm{X}}|=|\partial_{\Gamma,\hat{\Gamma},i}\overline{\bm{X}}|\right)$,\\
		|\partial_{\Gamma,\hat{\Gamma},i+1}\overline{\bm{X}}-\partial_{\Gamma,\hat{\Gamma},i}\overline{\bm{X}}|&if $\overline{\varphi}_i=0\land|\partial_{\Gamma,\hat{\Gamma},i+1}\overline{\bm{X}}|>|\partial_{\Gamma,\hat{\Gamma},i}\overline{\bm{X}}|$,\\
		-|\partial_{\Gamma,\hat{\Gamma},i+1}\overline{\bm{X}}-\partial_{\Gamma,\hat{\Gamma},i}\overline{\bm{X}}|&if $\overline{\varphi}_i=0\land|\partial_{\Gamma,\hat{\Gamma},i+1}\overline{\bm{X}}|<|\partial_{\Gamma,\hat{\Gamma},i}\overline{\bm{X}}|$.
	\end{dcases*}
\end{align}
Note that the case $\overline{\varphi}_i=0$ seldom occurs.
By using $\overline{F}_i$ and $\overline{G}_i$, the calculation of $(\mathcal{L}[\Gamma]-\mathcal{L}[\hat{\Gamma}])/\Delta t$ in \eqref{fd:length_calc} further proceeds as follows:
\begin{align}
	\frac{\mathcal{L}[\Gamma]-\mathcal{L}[\hat{\Gamma}]}{\Delta t}
	&=
	-\sum_{i=1}^N(\overline{F}_i\overline{\bm{N}}_i+\overline{G}_i\overline{\bm{T}}_i)\cdot(\overline{V}_i\overline{\bm{N}}_i+\overline{W}_i\overline{\bm{T}}_i)\\
	&=
	-\sum_{i=1}^N\overline{F}_i\overline{V}_i-\sum_{i=1}^N\overline{G}_i\overline{W}_i.
	\label{eq:time_difference_length_1}
\end{align}
	
\subsubsection{Normal velocity and curvature}
Let us consider the normal velocities $\{\overline{V}_i\}_{i=1}^N$ on vertices and the curvatures $\{\overline{\kappa}_i\}_{i=1}^N$ on edges.
We start by representing $\overline{\bm{N}}_i$ as a linear combination of $\overline{\bm{n}}_i$ and $\overline{\bm{n}}_{i+1}$, where $\overline{\bm{n}}_i = -J \overline{\bm{t}}_i$.
If $\overline{\varphi}_i\neq0$, 
$\overline{\bm{n}}_i$ and $\overline{\bm{n}}_{i+1}$ are linearly independent.
Thus the representation $\overline{\bm{N}}_i=\overline{\alpha}_i\overline{\bm{n}}_i+\overline{\beta}_i\overline{\bm{n}}_{i+1}$ is uniquely determined, where $\overline{\alpha}_i$ and $\overline{\beta}_i$ are
the unique solutions to the linear system
\begin{equation}
	\begin{dcases*}
		\overline{\bm{N}}_i\cdot\overline{\bm{n}}_i
	    =
		\overline{\alpha}_i+(\overline{\bm{n}}_i\cdot\overline{\bm{n}}_{i+1})\overline{\beta}_i,\\
		\overline{\bm{N}}_i\cdot\overline{\bm{n}}_{i+1}
		=
	    (\overline{\bm{n}}_i\cdot\overline{\bm{n}}_{i+1})\overline{\alpha}_i+\overline{\beta}_i.
	\end{dcases*}
\end{equation}
If $\overline{\varphi}_i=0$, then $\overline{\bm{n}}_i=\overline{\bm{n}}_{i+1}$ holds, and there are infinitely many possible candidates for the expression.
In order for the two cases $\overline{\varphi}_i\neq 0$ and $\overline{\varphi}_i= 0$ to be consistent,
we employ the following one:
\begin{equation}
	\overline{\bm{N}}_i=\overline{\alpha}_i\overline{\bm{n}}_i+\overline{\beta}_i\overline{\bm{n}}_{i+1},
	\quad
	\overline{\alpha}_i=\frac{\overline{r}_i}{\overline{r}_i+\overline{r}_{i+1}},
	\quad
	\overline{\beta}_i=\frac{\overline{r}_{i+1}}{\overline{r}_i+\overline{r}_{i+1}},
\end{equation}
where $\overline{r}_i=|\overline{\bm{X}}_i-\overline{\bm{X}}_{i-1}|$. 
Using the above coefficients $\overline{\alpha}_i$ and $\overline{\beta}_i$, we define the normal velocity on the $i$th edge by
\begin{equation}
	\overline{V}_i=\overline{\alpha}_i\overline{v}_i+\overline{\beta}_i\overline{v}_{i+1},
	\quad
	i=1,2,\ldots,N.
	\label{eq:relation_V_v}
\end{equation}
Substituting this expression into \eqref{eq:time_difference_length_1}, we obtain 
\begin{align}
	\frac{\mathcal{L}[\Gamma]-\mathcal{L}[\hat{\Gamma}]}{\Delta t}
	&=
	-\sum_{i=1}^N\overline{F}_i(\overline{\alpha}_i\overline{v}_i+\overline{\beta}_i\overline{v}_{i+1})-\sum_{i=1}^N\overline{G}_i\overline{W}_i\\
	&=
	-\sum_{i=1}^N(\overline{F}_i\overline{\alpha}_i+\overline{F}_{i-1}\overline{\beta}_{i-1})\overline{v}_i-\sum_{i=1}^N\overline{G}_i\overline{W}_i\\
	&=
	-\sum_{i=1}^N\left(\frac{\overline{F}_i\overline{\alpha}_i+\overline{F}_{i-1}\overline{\beta}_{i-1}}{\overline{r}_i}\right)\overline{v}_i\overline{r}_i-\sum_{i=1}^N\overline{G}_i\overline{W}_i.
\end{align}
If the term $-\sum_{i=1}^N\overline{G}_i\overline{W}_i$ is negligible, the above relation can be regarded as a discrete analogue of the first relation in \eqref{eq:dtdL_dtdA_dtdG}.  Defining the discrete curvature $\overline{\kappa}_i$ on $\overline{\Gamma}_i$ by
\begin{equation}
	\overline{\kappa}_i=-\frac{\overline{F}_i\overline{\alpha}_i+\overline{F}_{i-1}\overline{\beta}_{i-1}}{\overline{r}_i},
	\quad
	i=1,2,\ldots,N,
\end{equation}
we have
\begin{equation}
	\frac{\mathcal{L}[\Gamma]-\mathcal{L}[\hat{\Gamma}]}{\Delta t}
	=
	\sum_{i=1}^N\overline{\kappa}_i\overline{v}_i\overline{r}_i-\sum_{i=1}^N\overline{G}_i\overline{W}_i.
	\label{eq:difference_length_with_error}
\end{equation}
Note that as the limit $\Delta t \downarrow0$, we obtain the definitions of the unit tangent vector $\bm{T}_i$, unit outward normal vector $\bm{N}_i$, and discrete curvature $\kappa_i$ which were defined in Section~\ref{sec:polygonal_moving_boundary_problem}.

\subsection{Tangential velocity}
\label{subsec:tangential_velocity}

We need to define tangential velocities to perform long-time stable numerical computation.
The following discussion is based on the asymptotic uniform distribution method, which was reviewed in Section~\ref{subsec:tangential_velocity_pmbp}.
 
We discretize the requirement \eqref{eq:UDM_1} in time to obtain the following system:
\begin{equation}
	\frac{r_i-\hat{r}_i}{\Delta t}-\frac{1}{N}\frac{\mathcal{L}[\Gamma]-\mathcal{L}[\hat{\Gamma}]}{\Delta t}
	=
	\left(\frac{\mathcal{L}[\Gamma]}{N}-r_i\right)\omega,
	\quad
	i=1,2,\ldots,N.
	\label{fd:r1}
\end{equation}
On the other hand, under the fully discrete evolution law \eqref{eq:discretized_polygonal_evolution_law}, the change in the length of each edge is calculated to be
\begin{align}
	\frac{r_i-\hat{r}_i}{\Delta t}
	=
	\frac{\overline{r}_i}{(r_i+\hat{r}_i)/2}
	\left(
	\begin{array}{l}
		(\overline{\bm{t}}_i\cdot\overline{\bm{N}}_i)\overline{V}_i-(\overline{\bm{t}}_i\cdot\overline{\bm{N}}_{i-1})\overline{V}_{i-1}\\
		\quad
		+(\overline{\bm{t}}_i\cdot\overline{\bm{T}}_i)\overline{W}_i-(\overline{\bm{t}}_i\cdot\overline{\bm{T}}_{i-1})\overline{W}_{i-1}
	\end{array}
	\right).
	\label{fd:r2}
\end{align}
In order for \eqref{fd:r1} and \eqref{fd:r2} to be compatible,
\begin{multline}
	\frac{\overline{r}_i(\overline{\bm{t}}_i\cdot\overline{\bm{T}}_i)}{(r_i+\hat{r}_i)/2}\overline{W}_i-\frac{\overline{r}_i(\overline{\bm{t}}_i\cdot\overline{\bm{T}}_{i-1})}{(r_i+\hat{r}_i)/2}\overline{W}_{i-1}\\
	=
	-\frac{\overline{r}_i(\overline{\bm{t}}_i\cdot\overline{\bm{N}}_i)}{(r_i+\hat{r}_i)/2}\overline{V}_i+\frac{\overline{r}_i(\overline{\bm{t}}_i\cdot\overline{\bm{N}}_{i-1})}{(r_i+\hat{r}_i)/2}\overline{V}_{i-1}+\frac{1}{N}\frac{\mathcal{L}[\Gamma]-\mathcal{L}[\hat{\Gamma}]}{\Delta t}+\left(\frac{\mathcal{L}[\Gamma]}{N}-r_i\right)\omega
	\label{eq:UDM}
\end{multline}
must be satisfied for $i=2,3,\ldots,N$.
    
Since $\{\overline{W}_i\}_{i=1}^N$ are still underdetermined, we need to add an additional condition.
In view of \eqref{eq:difference_length_with_error}, the zero weighted average condition
\begin{equation}
	\sum_{i=1}^N\overline{G}_i\overline{W}_i=0
	\label{eq:zero_weighted_average_condition}
\end{equation}
may be a candidate because this condition eliminates the second term of \eqref{eq:difference_length_with_error}.
However, this condition makes sense only when there exists at least one index $i$ such that $\overline{G}_i\neq0$, which merely happens.
We instead consider the simple zero average condition:
\begin{equation}
	\sum_{i=1}^N\overline{W}_i\overline{r}_i^*=0,
	\quad
	\overline{r}_i^*=\frac{\overline{r}_i+\overline{r}_{i+1}}{2}.
	\label{eq:zero_average_condition}
\end{equation}
We shall employ this condition in Section~\ref{sec:numerical_experiments}.

\begin{rem}
\label{rem:tangential_velocity}
The relation \eqref{fd:r1} can be rewritten as
\begin{equation}
	r_i-\frac{\mathcal{L}[\Gamma]}{N}
	=
	\frac{1}{1+\Delta t\omega}\left(\hat{r}_i-\frac{\mathcal{L}[\hat{\Gamma}]}{N}\right).
	\label{rel:omega}
\end{equation}
This implies that if the distribution of the initial vertices is uniform ($r_i^0\equiv \mathcal{L}[\Gamma^0]/N$ for all $i$), then the asymptotic uniform distribution method keeps the distribution uniform.
Even if $\hat{r}_i\neq\mathcal{L}[\hat{\Gamma}]/N$, the distribution tends to be uniform as the numerical integration proceeds (i.e. $r_i^n \to \mathcal{L}[\Gamma^n]/N$ as $n\to \infty$).
The relation \eqref{rel:omega} also indicates that the relaxation parameter should be set sufficiently large so that the convergence is quick.
\end{rem}
    
The following is an immediate consequence of the above discussion, which can be seen as a fully discrete counterpart of Theorem~\ref{thm:dtdL_dtdA_dtdG}.
 
\begin{thm}
\label{thm:discrete_dtdL}
Suppose that the initial vertices are allocated uniformly, i.e. $r_i^0=\text{const.}$ and that the tangential velocities $\{W_i^n\}_{i=1}^N$ are set such that they satisfy \eqref{eq:UDM}.
Then, for the polygonal curve $\Gamma^n$ that evolves according to the fully discrete evolution law \eqref{eq:discretized_polygonal_evolution_law}, it follows that
%Let $\Gamma^n=\bigcup_{i=1}^N\Gamma_i^n$, $\Gamma_i^n=(\bm{X}_{i-1}^n,\bm{X}_i^n)$, denote a polygonal Jordan curve at $n$th time step and suppose that its time evolution is described by the fully-discrete polygonal evolution law \eqref{eq:discretized_polygonal_evolution_law}, in which $\Delta t=\Delta t^{n}$,  $\Gamma=\Gamma^{n+1}$ and $\hat{\Gamma}=\Gamma^{n}$, that is, $\bm{X}_i=\bm{X}_i^{n+1}$ and $\hat{\bm{X}}_i=\bm{X}_i^{n}$.
%        Moreover, we assume the relations on normal velocities \eqref{eq:relation_V_v} and compute the tangential velocities by the asymptotic uniform distribution method \eqref{eq:UDM}
%        % and the zero weighted average condition \eqref{eq:zero_weighted_average_condition} or 
%        with the zero average condition \eqref{eq:zero_average_condition}.
%        We then have
\begin{equation}
 	\frac{\mathcal{L}[\Gamma^{n+1}]-\mathcal{L}[\Gamma^{n}]}{\Delta t^{n}}
	=
	\sum_{i=1}^N\kappa_i^{n+1/2}v_i^{n+1/2}r_i^{n+1/2}
\end{equation}
as long as $\overline{G}_i=0$ for all $i$, 
        % Namely, the discrete counterpart of the time derivative of the length functional holds, 
        where we denote $\overline{\kappa}_i$, $\overline{v}_i$, and $\overline{r}_i$ by $\kappa_i^{n+1/2}$, $v_i^{n+1/2}$, and $r_i^{n+1/2}$, respectively.
    \end{thm}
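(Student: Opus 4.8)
The plan is to read the claimed identity straight off the length-difference formula \eqref{eq:difference_length_with_error}, which was already established for the fully discrete evolution law \eqref{eq:discretized_polygonal_evolution_law} \emph{without} any assumption on the tangential velocities. Recall that
\[
	\frac{\mathcal{L}[\Gamma]-\mathcal{L}[\hat{\Gamma}]}{\Delta t}
	=
	\sum_{i=1}^N\overline{\kappa}_i\overline{v}_i\overline{r}_i-\sum_{i=1}^N\overline{G}_i\overline{W}_i
\]
was obtained using only the summation-by-parts in \eqref{fd:length_calc}, the definitions of $\overline{\bm{N}}_i$, $\overline{\bm{T}}_i$, $\overline{F}_i$ and $\overline{G}_i$, the relation \eqref{eq:relation_V_v} between the edge and vertex normal velocities, and the definition of the discrete curvature $\overline{\kappa}_i$. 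In particular, the decomposition $\partial_{\Gamma,\hat{\Gamma},i+1}\overline{\bm{X}}-\partial_{\Gamma,\hat{\Gamma},i}\overline{\bm{X}}=\overline{F}_i\overline{\bm{N}}_i+\overline{G}_i\overline{\bm{T}}_i$ and the orthonormality of $\overline{\bm{N}}_i,\overline{\bm{T}}_i$ isolate $-\sum_{i=1}^N\overline{G}_i\overline{W}_i$ as the single obstruction to a clean fully discrete analogue of the first relation in \eqref{eq:dtdL_dtdA_dtdG}.

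Granting the hypothesis that $\overline{G}_i=0$ for every $i$, this obstruction vanishes term by term, so the displayed identity collapses to $(\mathcal{L}[\Gamma]-\mathcal{L}[\hat{\Gamma}])/\Delta t=\sum_{i=1}^N\overline{\kappa}_i\overline{v}_i\overline{r}_i$. Reinstating the suppressed time indices via $\Gamma=\Gamma^{n+1}$, $\hat{\Gamma}=\Gamma^n$, $\Delta t=\Delta t^n$ together with the abbreviations $\overline{\kappa}_i=\kappa_i^{n+1/2}$, $\overline{v}_i=v_i^{n+1/2}$, $\overline{r}_i=r_i^{n+1/2}$ then produces exactly the claimed formula. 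I would also point out explicitly that the uniform-distribution and compatibility hypotheses on $\{W_i^n\}$ do not actually enter this computation: the tangential velocities appear in \eqref{eq:difference_length_with_error} only through the product $\overline{G}_i\overline{W}_i$, which is annihilated the moment $\overline{G}_i=0$. This is precisely the paper's thesis that the scheme is curve-shortening independently of the discrete tangential velocities; the stated hypotheses merely fix the practical setting, mirroring Corollary~\ref{cor:dtdL_dtdA_pbmp_audm}, in which the vertex distribution stays uniform by Remark~\ref{rem:tangential_velocity}.

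Consequently, there is essentially no analytic obstacle inside the theorem itself, since all of the substance was front-loaded into the construction of $\overline{\bm{N}}_i$, $\overline{\bm{T}}_i$ and $\overline{\kappa}_i$ so that \eqref{eq:difference_length_with_error} already carries the full content. The one point I would still justify in a sentence is that $\overline{G}_i=0$ is the generic situation: by the definition of $\overline{G}_i$ one has $\overline{G}_i=0$ whenever $\overline{\varphi}_i\neq0$, and the exceptional case $\overline{\varphi}_i=0$, in which the averaged edges $\overline{\Gamma}_i$ and $\overline{\Gamma}_{i+1}$ are collinear, seldom occurs, so the hypothesis is mild. If one instead wished to avoid a pointwise hypothesis, the natural alternative would be to enforce the zero weighted-average condition \eqref{eq:zero_weighted_average_condition}, which kills $\sum_{i=1}^N\overline{G}_i\overline{W}_i$ without requiring each $\overline{G}_i$ to vanish; the difficulty with that route, and the reason I would keep the pointwise formulation, is that \eqref{eq:zero_weighted_average_condition} is vacuous unless some $\overline{G}_i\neq0$.
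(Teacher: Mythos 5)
Your proposal is correct and follows essentially the same route as the paper, which states the theorem as an immediate consequence of \eqref{eq:difference_length_with_error}: under the hypothesis $\overline{G}_i=0$ for all $i$, the obstruction term $-\sum_{i=1}^N\overline{G}_i\overline{W}_i$ vanishes termwise and the identity follows upon reinstating the time indices. Your side observation that the uniform-distribution and tangential-velocity hypotheses do not enter the length identity itself is likewise consistent with the paper's thesis that the evolution law is curve-shortening independently of the discrete tangential velocities.
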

    
\subsection{Examples}
We apply the proposed discretization method to the mean curvature flow, area-preserving mean curvature flow and Hele-Shaw flow.
Note that for each problem, specifying the averaged normal velocity on edges determines the dynamics of the fully discrete flow.

\begin{ex}[fully discrete polygonal mean curvature flow]
\label{ex:time-discretized_mean_curvature_flow_pmbp}
The normal velocity of the fully  discrete polygonal mean curvature flow is given by
\begin{equation}
	v_i^{n+1/2}=-\kappa_i^{n+1/2},
	\quad
	i=1,2,\ldots,N,\ n=0,1,\ldots
\end{equation}
Then, we have the curve-shortening property:
\begin{align}
	\frac{\mathcal{L}[\Gamma^{n+1}]-\mathcal{L}[\Gamma^n]}{\Delta t^n}
	=
	-\sum_{i=1}^N(\kappa_i^{n+1/2})^2r_i^{n+1/2}
	<
	0.
\end{align}
\end{ex}
    
\begin{ex}[fully discrete polygonal area-preserving mean curvature flow]
\label{ex:time-discretized_area-preserving_mean_curvature_flow_pmbp}
The normal velocity of the fully discrete polygonal area-preserving mean curvature flow is given by
\begin{equation}
	v_i^{n+1/2}
	=
	-\kappa_i^{n+1/2}+\langle\kappa^{n+1/2}\rangle_{\Gamma^{n+1/2}},
	\quad
	i=1,2,\ldots,N,\ n=0,1,\ldots.
\end{equation}
Then, we have the curve-shortening property:
\begin{align}
	&\frac{\mathcal{L}[\Gamma^{n+1}]-\mathcal{L}[\Gamma^n]}{\Delta t^n}
	=
	\sum_{i=1}^N\kappa_i^{n+1/2}\left(-\kappa_i^{n+1/2}+\langle\kappa^{n+1/2}\rangle_{\Gamma^{n+1/2}}\right)r_i^{n+1/2}\\
	&\hspace{5pt}=
	-\sum_{i=1}^N(\kappa_i^{n+1/2})^2r_i^{n+1/2}+\frac{1}{\mathcal{L}[\Gamma^{n+1/2}]}\left(\sum_{i=1}^N\kappa_i^{n+1/2}r_i^{n+1/2}\right)^2\\
	&\hspace{5pt}=
	\frac{1}{\mathcal{L}[\Gamma^{n+1/2}]}\left[-\sum_{i=1}^Nr_i^{n+1/2}\sum_{i=1}^N(\kappa_i^{n+1/2})^2r_i^{n+1/2}+\left(\sum_{i=1}^N\kappa_i^{n+1/2}r_i^{n+1/2}\right)^2\right]
	\le
	0.
\end{align}
\end{ex}
    
\begin{ex}[fully discrete polygonal Hele-Shaw flow]
\label{ex:time-discretized_Hele-Shaw_flow_pmbp}
The normal velocity of the fully discrete polygonal Hele-Shaw flow is given by
\begin{equation}
	v_i^{n+1/2}=-\nabla p^{n+1/2}\cdot\bm{n}_i^{n+1/2},
	\quad
	i=1,2,\ldots,N,\ n=0,1,\ldots,
\end{equation}
where $p^{n+1/2}$ is the solution to the Laplace equation
\begin{equation}
	\begin{dcases*}
		\triangle p^{n+1/2}=0&in $\Omega^{n+1/2}$,\\
		p^{n+1/2}=\sigma\kappa_i^{n+1/2}&on $\Gamma_i^{n+1/2}$, $i=1,2,\ldots,N$,\\
		\nabla p^{n+1/2}\cdot\bm{n}_i^{n+1/2}\equiv\text{const.}&on $\Gamma_i^{n+1/2}$, $i=1,2,\ldots,N$.
	\end{dcases*}
\label{eq:Hele-Shaw_mid}
\end{equation}
Then, we have the curve-shortening property:
\begin{align}
	\frac{\mathcal{L}[\Gamma^{n+1}]-\mathcal{L}[\Gamma^n]}{\Delta t^n}
	&=
	-\frac{1}{\sigma}\sum_{i=1}^Np^{n+1/2}|_{\Gamma_i^{n+1/2}}\nabla p^{n+1/2}|_{\Gamma_i^{n+1/2}}\cdot\bm{n}_i^{n+1/2}r_i^{n+1/2}\\
	&=
	-\frac{1}{\sigma}\int_{\Gamma^{n+1/2}}p^{n+1/2}\nabla p^{n+1/2}\cdot\bm{n}^{n+1/2}\,\mathrm{d}s\\
	&=
	-\frac{1}{\sigma}\int_{\Omega^{n+1/2}}\nabla\cdot(p^{n+1/2}\nabla p^{n+1/2})\,\mathrm{d}\bm{x}\\
	&=
	-\frac{1}{\sigma}\int_{\Omega^{n+1/2}}|\nabla p^{n+1/2}|^2\,\mathrm{d}\bm{x}
	\le
	0,
\end{align}
where $\bm{n}^{n+1/2}:=\sum_{i=1}^N\bm{n}_i^{n+1/2}\chi_{\Gamma_i^{n+1/2}}$.
\end{ex}
    
\begin{rem}
The proposed fully discrete evolution law does not inherit the area-preservation/dissipation property exactly.
The behavior of the enclosed area will be checked numerically in Section~\ref{sec:numerical_experiments}.
A fully discrete polygonal evolution law that inherits the property will be given in Appendix \ref{sec:area}.
\end{rem}
    
\section{Numerical experiments}
\label{sec:numerical_experiments}

\subsection{Results}

We give several numerical experiments to illustrate the qualitative behavior of the proposed fully discrete evolution law.
All were carried out by using Julia 1.1.0 on a machine with 3.1 GHz Intel Core i5, 16GB memory, OS X 10.14.5. 
Nonlinear equations are solved by \textsf{nlsolve}\footnote{The function \textsf{nlsolve} is a typical nonlinear solver in Julia. \url{https://pkg.julialang.org/docs/ NLsolve/}} with residual tolerance $10^{-8}$.
    
\subsubsection*{Computation of tangential velocity}

In Section \ref{subsec:tangential_velocity}, we have developed the asymptotic uniform distribution method.
We employ the method with the simple zero average condition \eqref{eq:zero_average_condition}.
The initial vertices are arranged uniformly.
    
\subsection*{Time step size}
    
We use adaptive time step sizes.
Since all the target problems are gradient flows of the length functional on some space, the time step size should be smaller than the time derivative of the length functional so that the discrete flow captures key dynamics of the original moving boundary problem.
Based on this idea, we control the time step size by the formula
\begin{align}
	\Delta t^{n+1}
	&:=
	\min\left\{\tau,\left(\frac{\mathcal{L}[\Gamma^{n+1}]-\mathcal{L}[\Gamma^{n}]}{\Delta t^{n}}\right)^{-2}\right\},
	\quad
	n=0,1,\ldots,\\
	\Delta t^0
	&:=
	\min\left\{\tau,\left(\sum_{i=1}^N(\kappa_i^0)^2r_i^0\right)^{-2}\right\},
\end{align}
where $\kappa_i^0$ is the discrete curvature of the initial curve $\Gamma^0$ defined in \eqref{eq:discrete_curvature} (see~\cite{K17}: for more details).
Note that a general-purpose explicit time integrator, such as the Runge--Kutta method, requires sufficiently small time step sizes as mentioned in Section \ref{sec:introduction}.
Thus the above formula makes sense only for a numerical integrator that allows relatively large time step sizes.
    
\subsection*{Initial curve and parameters}

The initial curve is set to
\begin{equation}
	x_1(t)=0.5a_1(t),
	\quad
	x_2(t)=0.54a_3(t),
	\quad
	t\in[0,1],
\label{eq:initial_curve}
\end{equation}
where
\begin{align}
	a_1(t)&=1.8\cos(2\pi t),
    \quad
	a_2(t)=0.2+\sin(\pi t)\sin(6\pi t)\sin(2a_1(t)),\\
	a_3(t)&=0.5\sin(2\pi t)+\sin a_1(t)+a_2(t)\sin(2\pi t),
	\quad
	t\in[0,1].
\end{align}
We set $N$ vertices $\tilde{\bm{X}}_i^0=(\tilde{X}_{i,1}^0,\tilde{X}_{i,2}^0)^{\mathrm{T}}$ to 
\begin{equation}
	\tilde{X}_{i,1}^0=x_1(i/N),
	\quad
	\tilde{X}_{i,2}^0=x_2(i/N),
	\quad
	i=1,2,\ldots,N.
	\label{eq:original_initial_vertices}
\end{equation}
Since the distribution of the vertices is not uniform, we modify them by performing the asymptotic uniform distribution method with all the normal velocities being equal to 0 until the vertices are arranged uniformly.
The resulted vertices, denoted by $\bm{X}_i^0$, are used as the initial curve.
Figure \ref{fig:initial_vertices} compares $\bm{X}^0$ with $\tilde{\bm{X}}^0$.
It is observed that the initial vertices after performing the asymptotic uniform distribution method slant a little from the original curve; however, this does not be a matter in a practical situation.

\begin{figure}[tb]
\centering
\begin{tikzpicture}
\tikzstyle{every node}=[font=\scriptsize]
\begin{axis}[width=0.53\hsize,height=0.53\hsize,%restrict x to domain = 0:69,
xmax=1,xmin=-1,
ymax=1, ymin = -1,
title style={font=\normalsize},
title= (a) original arrangement $\tilde{\bm{X}}^0$,
	]
\addplot[domain=0:{1}, variable=\t, samples=100] (
{0.9*cos(2*pi*\t r)},
{0.54*(0.5*sin(2*pi*\t r)+sin(1.8*cos(2*pi*\t r) r)+(0.2+sin(pi*\t r)*sin(6*pi*\t r)*sin(2*1.8*cos(2*pi*\t r) r))*sin(2*pi*\t r))}
);
\addplot[only marks,mark=o,mark size=1pt
] table {
0.892903 0.574290
0.871725 0.622076
0.836799 0.669419
0.788676 0.721245
0.728115 0.779830
0.656072 0.832649
0.573682 0.847505
0.482244 0.788687
0.383201 0.651479
0.278115 0.485492
0.168643 0.371707
0.056511 0.356935
-0.056511 0.402843
-0.168643 0.408130
-0.278115 0.293397
-0.383201 0.070030
-0.482244 -0.171491
-0.573682 -0.336939
-0.656072 -0.394244
-0.728115 -0.379486
-0.788676 -0.354588
-0.836799 -0.361927
-0.871725 -0.407095
-0.892903 -0.469824
-0.900000 -0.525878
-0.892903 -0.564576
-0.871725 -0.595105
-0.836799 -0.640229
-0.788676 -0.718794
-0.728115 -0.823851
-0.656072 -0.911762
-0.573682 -0.919447
-0.482244 -0.809803
-0.383201 -0.614019
-0.278115 -0.425602
-0.168643 -0.334479
-0.056511 -0.351665
0.056511 -0.397573
0.168643 -0.370903
0.278115 -0.233507
0.383201 -0.032571
0.482244 0.150375
0.573682 0.264997
0.656072 0.315131
0.728115 0.335465
0.788676 0.357039
0.836799 0.391117
0.871725 0.434067
0.892903 0.479538
0.900000 0.525878
};
\end{axis}
\end{tikzpicture}
\begin{tikzpicture}
\tikzstyle{every node}=[font=\scriptsize]
\begin{axis}[width=0.53\hsize,height=0.53\hsize,%restrict x to domain = 0:69,
xmax=1,xmin=-1,
ymax=1, ymin = -1,
title style={font=\normalsize},
title= (b) initial arrangement $\bm{X}^0$,
	]
\addplot[domain=0:{1}, variable=\t, samples=100] (
{0.9*cos(2*pi*\t r)},
{0.54*(0.5*sin(2*pi*\t r)+sin(1.8*cos(2*pi*\t r) r)+(0.2+sin(pi*\t r)*sin(6*pi*\t r)*sin(2*1.8*cos(2*pi*\t r) r))*sin(2*pi*\t r))}
);
\addplot[only marks,mark=o,mark size=1pt
] table {
0.872881 0.624209
0.801662 0.710895
0.714591 0.781644
0.614303 0.831933
0.502292 0.825583
0.421516 0.747724
0.376509 0.644957
0.316764 0.549998
0.248150 0.461235
0.168769 0.381955
0.062465 0.346091
-0.044119 0.381113
-0.152863 0.408710
-0.250314 0.353122
-0.301285 0.253178
-0.348278 0.151304
-0.388402 0.046534
-0.435968 -0.055075
-0.478087 -0.159059
-0.505466 -0.267858
-0.566286 -0.362132
-0.678381 -0.357508
-0.789921 -0.345440
-0.873531 -0.420246
-0.899750 -0.529330
-0.855033 -0.632224
-0.781244 -0.716733
-0.722580 -0.812365
-0.641513 -0.889920
-0.530968 -0.909067
-0.453706 -0.827720
-0.439328 -0.716455
-0.384540 -0.618552
-0.332303 -0.519264
-0.275591 -0.422463
-0.195748 -0.343648
-0.083563 -0.344808
0.018231 -0.391976
0.130407 -0.390222
0.215643 -0.317273
0.278434 -0.224299
0.331968 -0.125705
0.384016 -0.026318
0.433474 0.074383
0.484531 0.174282
0.561866 0.255559
0.664218 0.301503
0.768661 0.342472
0.857210 0.411362
0.899584 0.515242
};
\end{axis}
\end{tikzpicture}
\caption{Arrangements of the initial vertices.
        The solid lines represent the original initial curve defined by \eqref{eq:initial_curve}.
        The circles represent the positions of the original vertices defined by \eqref{eq:original_initial_vertices} in (a) and the ones of the vertices after performing the asymptotic uniform distribution method in (b).}
\label{fig:initial_vertices}
\end{figure}
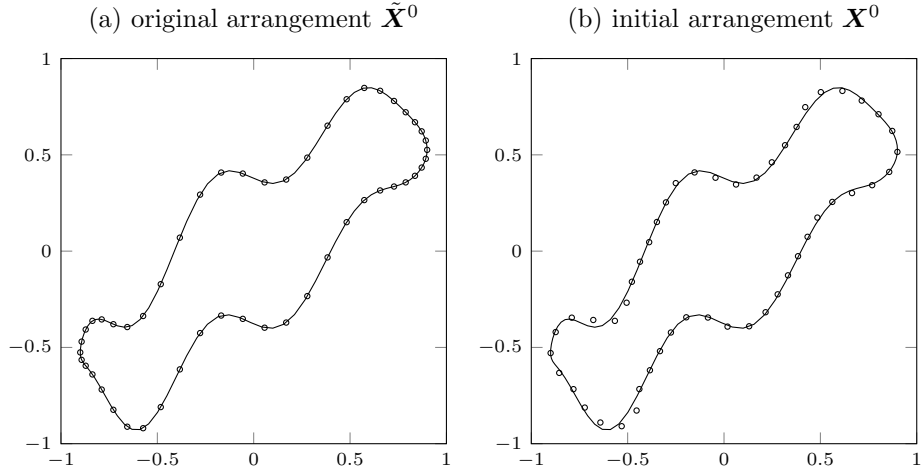
	
The number $N$ of vertices, the parameter $\tau$, and the relaxation parameter $\omega$ are respectively set to $50$, $0.01$, and $10N/\Delta t^n$ for all numerical experiments.

%\subsection*{Solver for the modified polygonal evolution law}
%    
%Since the fully discrete polygonal evolution law \eqref{eq:discretized_polygonal_evolution_law} is implicit, we adopt the Newton method to obtain the solution $\Gamma^n$ at each time step with the adaptive time step size $\Delta t^{n-1}$.
    
\begin{ex}[mean curvature flow]
Our first example is the mean curvature flow (see Examples \ref{ex:mean_curvature_flow}, \ref{ex:mean_curvature_flow_pmbp}, and \ref{ex:time-discretized_mean_curvature_flow_pmbp}).
It is well known that the solution curve to the mean curvature flow becomes convex in finite time and shrinks to a point with converging to a circle \cite{GH86,G87}.
Figure \ref{fig:MCF} shows the evolution of the flow and the behavior of the length and enclosed area.
It is observed that the curve converges to a circle, and the size becomes small as time passes.
The length monotonically decreases.
Further, the area-dissipation property is also captured well: the area decreases at an almost constant rate.
Figure~\ref{fig:MCF_stepsize} shows the time step sizes selected during the numerical integration.
Our preliminary experiments suggested that the time step sizes should be smaller than $0.1N^{-2}$ ($=4.0\times 10^{-5}$ in our problem setting) for the Runge--Kutta method.
In comparison with the Runge--Kutta method, larger step sizes are selected for the proposed evolution law.

\input{MCF.tex}
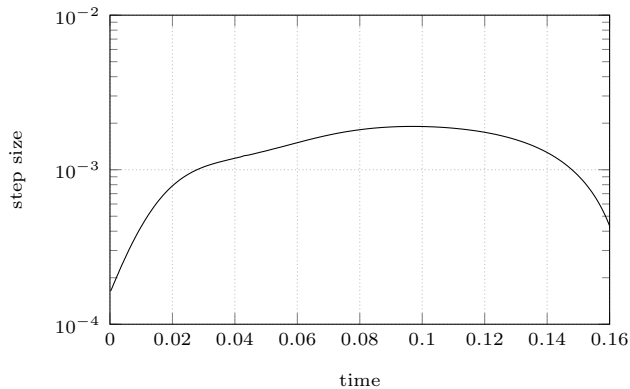
\begin{figure}[tb]
\centering
\begin{tikzpicture}[baseline]
\tikzstyle{every node}=[font=\scriptsize]
\begin{semilogyaxis}[width=0.65\hsize,height=0.45\hsize,%restrict x to domain = 0:69,
xmax=0.16,xmin=0,
ymax=0.01, ymin = 0.0001,
scaled ticks=false,
tick label style={/pgf/number format/fixed},
grid = major,
grid style={densely dotted},
xlabel = time,
ylabel = step size,
	]
\addplot[thin
] table {
0.000164	0.000164
0.00033	0.000166
0.000499	0.000169
0.000671	0.000172
0.000846	0.000175
0.001025	0.000179
0.001208	0.000183
0.001394	0.000186
0.001584	0.00019
0.001778	0.000194
0.001977	0.000199
0.00218	0.000203
0.002388	0.000208
0.0026	0.000212
0.002817	0.000217
0.00304	0.000223
0.003268	0.000228
0.003502	0.000234
0.003742	0.00024
0.003988	0.000246
0.00424	0.000252
0.004499	0.000259
0.004765	0.000266
0.005038	0.000273
0.005319	0.000281
0.005608	0.000289
0.005906	0.000298
0.006213	0.000307
0.006529	0.000316
0.006855	0.000326
0.007191	0.000336
0.007538	0.000347
0.007896	0.000358
0.008266	0.00037
0.008649	0.000383
0.009045	0.000396
0.009455	0.00041
0.009879	0.000424
0.010318	0.000439
0.010773	0.000455
0.011245	0.000472
0.011734	0.000489
0.012242	0.000508
0.012769	0.000527
0.013316	0.000547
0.013884	0.000568
0.014474	0.00059
0.015087	0.000613
0.015723	0.000636
0.016383	0.00066
0.017068	0.000685
0.017778	0.00071
0.018514	0.000736
0.019276	0.000762
0.020064	0.000788
0.020878	0.000814
0.021718	0.00084
0.022585	0.000867
0.023477	0.000892
0.024394	0.000917
0.025335	0.000941
0.0263	0.000965
0.027287	0.000987
0.028296	0.001009
0.029325	0.001029
0.030374	0.001049
0.031441	0.001067
0.032525	0.001084
0.033626	0.001101
0.034743	0.001117
0.035875	0.001132
0.037022	0.001147
0.038185	0.001163
0.039363	0.001178
0.040557	0.001194
0.041766	0.001209
0.042998	0.001232
0.04424	0.001242
0.045499	0.001259
0.046776	0.001277
0.048073	0.001297
0.049388	0.001315
0.050724	0.001336
0.052083	0.001359
0.053464	0.001381
0.054868	0.001404
0.056297	0.001429
0.057752	0.001455
0.059234	0.001482
0.060743	0.001509
0.062281	0.001538
0.063848	0.001567
0.065444	0.001596
0.067069	0.001625
0.068723	0.001654
0.070405	0.001682
0.072115	0.00171
0.073851	0.001736
0.075612	0.001761
0.077396	0.001784
0.079202	0.001806
0.081028	0.001826
0.082872	0.001844
0.084732	0.00186
0.086605	0.001873
0.088489	0.001884
0.090382	0.001893
0.092282	0.0019
0.094186	0.001904
0.096092	0.001906
0.097998	0.001906
0.099902	0.001904
0.101801	0.001899
0.103694	0.001893
0.105578	0.001884
0.107452	0.001874
0.109313	0.001861
0.111159	0.001846
0.112989	0.00183
0.1148	0.001811
0.116591	0.001791
0.11836	0.001769
0.120105	0.001745
0.121824	0.001719
0.123516	0.001692
0.125178	0.001662
0.126809	0.001631
0.128408	0.001599
0.129973	0.001565
0.131503	0.00153
0.132996	0.001493
0.134452	0.001456
0.135869	0.001417
0.137246	0.001377
0.138583	0.001337
0.139879	0.001296
0.141134	0.001255
0.142347	0.001213
0.143518	0.001171
0.144648	0.00113
0.145736	0.001088
0.146783	0.001047
0.147789	0.001006
0.148755	0.000966
0.149682	0.000927
0.15057	0.000888
0.151421	0.000851
0.152235	0.000814
0.153013	0.000778
0.153756	0.000743
0.154466	0.00071
0.155143	0.000677
0.155789	0.000646
0.156404	0.000615
0.15699	0.000586
0.157548	0.000558
0.158079	0.000531
0.158585	0.000506
0.159066	0.000481
0.159523	0.000457
0.159958	0.000435
0.160371	0.000413
0.160764	0.000393
};
\end{semilogyaxis}
\end{tikzpicture}
\caption{Time step sizes used for the mean curvature flow.}
\label{fig:MCF_stepsize}
\end{figure}
\end{ex}
    
\begin{ex}
The next example is the area-preserving mean curvature flow (see Examples \ref{ex:area-preserving_mean_curvature_flow}, \ref{ex:area-preserving_mean_curvature_flow_pmbp}, and \ref{ex:time-discretized_area-preserving_mean_curvature_flow_pmbp}).
Theoretically, the solution curve converges to a circle, but does not converge to a point due to the area-preservation~\cite{G86}.
Figure \ref{fig:APMCF} shows the numerical results, from which the expected behavior is observed.
We note that the enclosed are is almost constant.
Figure~\ref{fig:APMCF_stepsize} shows the time step sizes selected during the numerical integration.
In comparison with the previous example for the mean curvature flow (Figure~\ref{fig:MCF_stepsize}), much larger step step sizes are selected.

\input{APMCF.tex}
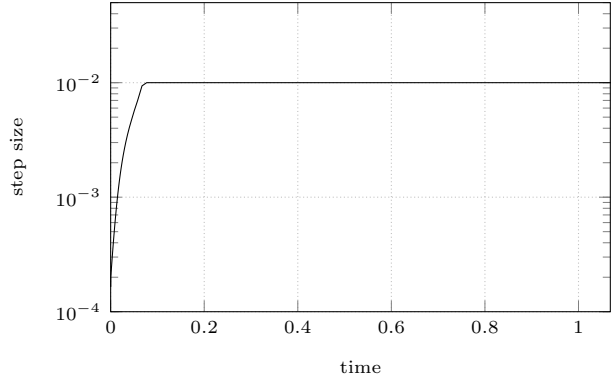
\begin{figure}[tb]
\centering
\begin{tikzpicture}[baseline]
\tikzstyle{every node}=[font=\scriptsize]
\begin{semilogyaxis}[width=0.65\hsize,height=0.45\hsize,%restrict x to domain = 0:69,
xmax=1.068,xmin=0,
ymax=0.05, ymin = 0.0001,
scaled ticks=false,
tick label style={/pgf/number format/fixed},
grid = major,
grid style={densely dotted},
xlabel = time,
ylabel = step size,
	]
\addplot[thin
] table {
0.000164	0.000164
0.000367	0.000203
0.000575	0.000208
0.000789	0.000214
0.00101	0.000221
0.001237	0.000227
0.001471	0.000234
0.001713	0.000242
0.001963	0.00025
0.002221	0.000258
0.002488	0.000267
0.002765	0.000277
0.003052	0.000287
0.00335	0.000298
0.00366	0.00031
0.003983	0.000323
0.00432	0.000337
0.004672	0.000352
0.00504	0.000368
0.005426	0.000386
0.005831	0.000405
0.006257	0.000426
0.006705	0.000448
0.007179	0.000474
0.00768	0.000501
0.008212	0.000532
0.008778	0.000566
0.009383	0.000605
0.01003	0.000647
0.010725	0.000695
0.011475	0.00075
0.012286	0.000811
0.013168	0.000882
0.01413	0.000962
0.015185	0.001055
0.016347	0.001162
0.017633	0.001286
0.019065	0.001432
0.020668	0.001603
0.022472	0.001804
0.024513	0.002041
0.026834	0.002321
0.029487	0.002653
0.032534	0.003047
0.036053	0.003519
0.040138	0.004085
0.044912	0.004774
0.050629	0.005717
0.057608	0.006979
0.066984	0.009376
0.076984	0.01
0.086984	0.01
0.096984	0.01
0.106984	0.01
0.116984	0.01
0.126984	0.01
0.136984	0.01
0.146984	0.01
0.156984	0.01
0.166984	0.01
0.176984	0.01
0.186984	0.01
0.196984	0.01
0.206984	0.01
0.216984	0.01
0.226984	0.01
0.236984	0.01
0.246984	0.01
0.256984	0.01
0.266984	0.01
0.276984	0.01
0.286984	0.01
0.296984	0.01
0.306984	0.01
0.316984	0.01
0.326984	0.01
0.336984	0.01
0.346984	0.01
0.356984	0.01
0.366984	0.01
0.376984	0.01
0.386984	0.01
0.396984	0.01
0.406984	0.01
0.416984	0.01
0.426984	0.01
0.436984	0.01
0.446984	0.01
0.456984	0.01
0.466984	0.01
0.476984	0.01
0.486984	0.01
0.496984	0.01
0.506984	0.01
0.516984	0.01
0.526984	0.01
0.536984	0.01
0.546984	0.01
0.556984	0.01
0.566984	0.01
0.576984	0.01
0.586984	0.01
0.596984	0.01
0.606984	0.01
0.616984	0.01
0.626984	0.01
0.636984	0.01
0.646984	0.01
0.656984	0.01
0.666984	0.01
0.676984	0.01
0.686984	0.01
0.696984	0.01
0.706984	0.01
0.716984	0.01
0.726984	0.01
0.736984	0.01
0.746984	0.01
0.756984	0.01
0.766984	0.01
0.776984	0.01
0.786984	0.01
0.796984	0.01
0.806984	0.01
0.816984	0.01
0.826984	0.01
0.836984	0.01
0.846984	0.01
0.856984	0.01
0.866984	0.01
0.876984	0.01
0.886984	0.01
0.896984	0.01
0.906984	0.01
0.916984	0.01
0.926984	0.01
0.936984	0.01
0.946984	0.01
0.956984	0.01
0.966984	0.01
0.976984	0.01
0.986984	0.01
0.996984	0.01
1.006984	0.01
1.016984	0.01
1.026984	0.01
1.036984	0.01
1.046984	0.01
1.056984	0.01
1.066984	0.01
1.076984	0.01
};
\end{semilogyaxis}
\end{tikzpicture}
\caption{Time step sizes used for the area-preserving mean curvature flow.}
\label{fig:APMCF_stepsize}
\end{figure}
\end{ex}
    
\begin{ex}
Our final example is the Hele-Shaw flow (see Examples \ref{ex:Hele-Shaw_flow}, \ref{ex:Hele-Shaw_flow_pmbp}, and \ref{ex:time-discretized_Hele-Shaw_flow_pmbp}).
To solve the potential problem (the Laplace equation) \eqref{eq:Hele-Shaw_mid}, we employ the method of fundamental solutions (MFS)~\cite{SY19}.
Briefly speaking, we approximate a function $p^{n+1/2}$ by a linear combination of the fundamental solution $E(\bm{x})=(2\pi)^{-1}\log|\bm{x}|$ of the Laplace operator:
\begin{equation}
	p^{n+1/2}(\bm{x})
	\approx
	P^{n+1/2}(\bm{x})
	:=
	Q_0^{n+1/2}+\sum_{j=1}^NQ_j^{n+1/2}E(\bm{x}-\bm{y}_j^{n+1/2}),
\end{equation}
where $\{\bm{y}_j^{n+1/2}\}_{j=1}^N$ are the singular points suitably chosen from $\mathbb{R}^2\setminus\overline\Omega^{n+1/2}$.
The coefficients $\{Q_j^{n+1/2}\}_{j=0}^N$ are determined by the collocation method.
More precisely, we use the points $\bm{X}_{i,\mathrm{mid}}^{n+1/2}=(\bm{X}_i^{n+1/2}+\bm{X}_{i-1}^{n+1/2})/2$ as the collocation points, and solve the following collocation equations:
\begin{equation}
	P^{n+1/2}(\bm{X}_{i,\mathrm{mid}}^{n+1/2})=\sigma\kappa_i^{n+1/2},
	\quad
	i=1,2,\ldots,N.
\end{equation}
%The strong feature of the MFS is that we can compute its derivative, so the normal velocity, analytically.
Our numerical results are depicted in Figure \ref{fig:Hele-Shaw}.
The solution curve converges to a circle.
The length monotonically decreases, and the area-preserving property is also captured well.
Figure~\ref{fig:HS_stepsize} shows the time step sizes selected during the numerical integration.
The selected step step sizes reach $\tau = 0.01$ soon after the numerical integration begins.

\input{Hele-Shaw.tex}
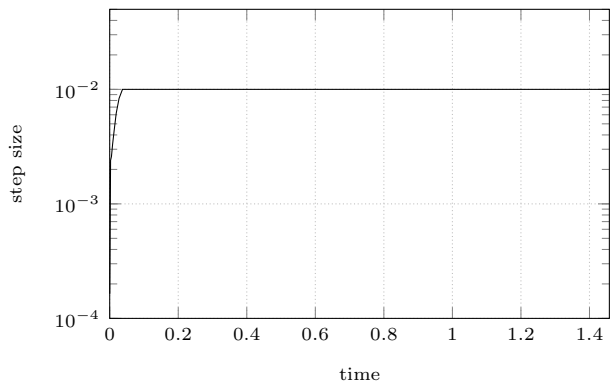
\begin{figure}[tb]
\centering
\begin{tikzpicture}[baseline]
\tikzstyle{every node}=[font=\scriptsize]
\begin{semilogyaxis}[width=0.65\hsize,height=0.45\hsize,%restrict x to domain = 0:69,
xmax=1.458,xmin=0,
ymax=0.05, ymin = 0.0001,
scaled ticks=false,
tick label style={/pgf/number format/fixed},
grid = major,
grid style={densely dotted},
xlabel = time,
ylabel = step size,
	]
\addplot[thin
] table {
0.000164	0.000164
0.002564	0.0024
0.005163	0.002599
0.00848	0.003317
0.0128	0.00432
0.018975	0.006175
0.027309	0.008334
0.037309	0.01
0.047309	0.01
0.057309	0.01
0.067309	0.01
0.077309	0.01
0.087309	0.01
0.097309	0.01
0.107309	0.01
0.117309	0.01
0.127309	0.01
0.137309	0.01
0.147309	0.01
0.157309	0.01
0.167309	0.01
0.177309	0.01
0.187309	0.01
0.197309	0.01
0.207309	0.01
0.217309	0.01
0.227309	0.01
0.237309	0.01
0.247309	0.01
0.257309	0.01
0.267309	0.01
0.277309	0.01
0.287309	0.01
0.297309	0.01
0.307309	0.01
0.317309	0.01
0.327309	0.01
0.337309	0.01
0.347309	0.01
0.357309	0.01
0.367309	0.01
0.377309	0.01
0.387309	0.01
0.397309	0.01
0.407309	0.01
0.417309	0.01
0.427309	0.01
0.437309	0.01
0.447309	0.01
0.457309	0.01
0.467309	0.01
0.477309	0.01
0.487309	0.01
0.497309	0.01
0.507309	0.01
0.517309	0.01
0.527309	0.01
0.537309	0.01
0.547309	0.01
0.557309	0.01
0.567309	0.01
0.577309	0.01
0.587309	0.01
0.597309	0.01
0.607309	0.01
0.617309	0.01
0.627309	0.01
0.637309	0.01
0.647309	0.01
0.657309	0.01
0.667309	0.01
0.677309	0.01
0.687309	0.01
0.697309	0.01
0.707309	0.01
0.717309	0.01
0.727309	0.01
0.737309	0.01
0.747309	0.01
0.757309	0.01
0.767309	0.01
0.777309	0.01
0.787309	0.01
0.797309	0.01
0.807309	0.01
0.817309	0.01
0.827309	0.01
0.837309	0.01
0.847309	0.01
0.857309	0.01
0.867309	0.01
0.877309	0.01
0.887309	0.01
0.897309	0.01
0.907309	0.01
0.917309	0.01
0.927309	0.01
0.937309	0.01
0.947309	0.01
0.957309	0.01
0.967309	0.01
0.977309	0.01
0.987309	0.01
0.997309	0.01
1.007309	0.01
1.017309	0.01
1.027309	0.01
1.037309	0.01
1.047309	0.01
1.057309	0.01
1.067309	0.01
1.077309	0.01
1.087309	0.01
1.097309	0.01
1.107309	0.01
1.117309	0.01
1.127309	0.01
1.137309	0.01
1.147309	0.01
1.157309	0.01
1.167309	0.01
1.177309	0.01
1.187309	0.01
1.197309	0.01
1.207309	0.01
1.217309	0.01
1.227309	0.01
1.237309	0.01
1.247309	0.01
1.257309	0.01
1.267309	0.01
1.277309	0.01
1.287309	0.01
1.297309	0.01
1.307309	0.01
1.317309	0.01
1.327309	0.01
1.337309	0.01
1.347309	0.01
1.357309	0.01
1.367309	0.01
1.377309	0.01
1.387309	0.01
1.397309	0.01
1.407309	0.01
1.417309	0.01
1.427309	0.01
1.437309	0.01
1.447309	0.01
1.457309	0.01
1.467309	0.01
};
\end{semilogyaxis}
\end{tikzpicture}
\caption{Time step sizes used for the Hele-Shaw flow.}
\label{fig:HS_stepsize}
\end{figure}
\end{ex}

\subsection{Discussion}

The above numerical experiments confirm that the proposed method performs well.
For the remainder of this section, we focus on the mean curvature flow and explore properties of the proposed method.
% In the last part of this section, we will discuss in more depth using the mean curvature flow.

\subsubsection*{Comparison with the Runge--Kutta method}

We check the correctness of the proposed method.
To this end, we compare the numerical solution of the proposed method with the reference solution obtained by the usual fourth-order Runge--Kutta method with much smaller time step size applied to the semi-discrete polygonal mean curvature flow (Example \ref{ex:mean_curvature_flow_pmbp}).
The time step size for the proposed method is adaptively controlled as done in the previous subsection, while the uniform step size $\Delta t=0.1N^{-2}$ is employed for the Runge--Kutta method, where $N=50$.
Both results are displayed in Figure~\ref{fig:MCF_comparison}, from which we observe that the asymptotic behavior of our method is consistent with that of the Runge--Kutta method.
\input{MCF_comparison}

% First, we verify the correctness of the results obtained by our method by comparing them with those obtained by well-known numerical methods.
% To this end, we consider the semi-discrete polygonal mean curvature flow (Example \ref{ex:mean_curvature_flow_pmbp}) and apply the usual fourth-order Runge--Kutta method for time discretization.
% We show the obtained results in Figure~\ref{fig:MCF_comparison}.
% \input{MCF_comparison}
% As mentioned in the Introduction (Section \ref{sec:introduction}), the time increments must be small enough to be stable when using a general-purpose method such as the Runge--Kutta method.
% Here, a uniform time step size $\Delta t=0.1N^{-2}$ is employed in the computation.
% The asymptotic behavior of our method's results is consistent with that of the Runge--Kutta method, which shows the correctness of our method, as confirmed in Figure \ref{fig:MCF_comparison}.

\subsubsection*{Robustness}

We check the robustness of the proposed method with respect to the time step size.
We here consider the uniform time step size and set it to $\Delta t = 0.01$, which is much bigger than the step size selected by the adaptive method.
The results for both the proposed method and the Runge--Kutta method are shown in Figure \ref{fig:MCF_large_time_step}, from which we observe a significant difference.
From the right figure (b), we see that the Runge--Kutta method is unstable even in a single time stepping (the dotted curve represents the numerical solution after one step).
In contrast, the left figure (a), which is almost consistent with the previous computations, indicates that the proposed method performs better for a longer time interval.
We may conclude that the proposed method is more robust than the Runge--Kutta method.
However, the proposed method may produce a wrong solution if the computation proceeds with this step size to later times. 
Indeed, as shown in Figure \ref{fig:MCF_later_times}, the numerical solution becomes unstable if we continue the numerical computation from the final state in Figure~\ref{fig:MCF_large_time_step}(a).
Therefore, we recommend using the adaptive method.
\input{MCF_large_time_step}
\begin{figure}[tb]
\centering
\begin{tikzpicture}[baseline]
\tikzstyle{every node}=[font=\scriptsize]
\begin{axis}[width=0.49\hsize,height=0.49\hsize,%restrict x to domain = 0:69,
xmax=0.5,xmin=-0.5,
ymax=0.5, ymin = -0.5,
	]
\addplot[thin,smooth
] table {
 0.457838 0.324568
 0.432084 0.372699
 0.391679 0.409455
 0.342267 0.433945
 0.288815 0.443754
 0.230501 0.443269
 0.171195 0.431085
 0.131334 0.409313
 0.075431 0.389671
 0.028484 0.359997
 -0.019730 0.334914
 -0.065222 0.301955
 -0.109372 0.270617
 -0.151675 0.235348
 -0.192635 0.198803
 -0.232395 0.160761
 -0.267813 0.118906
 -0.310438 0.076551
 -0.339599 0.031615
 -0.376763 -0.011287
 -0.401317 -0.060807
 -0.431574 -0.107941
 -0.446750 -0.158660
 -0.462509 -0.204971
 -0.463875 -0.267777
 -0.454078 -0.321489
 -0.426041 -0.370143
 -0.385756 -0.406063
 -0.335535 -0.426809
 -0.281613 -0.433904
 -0.228364 -0.428690
 -0.177813 -0.416747
 -0.128169 -0.397809
 -0.077937 -0.374579
 -0.029194 -0.346894
 0.018067 -0.318415
 0.064417 -0.285483
 0.110634 -0.253483
 0.153274 -0.217741
 0.195916 -0.183766
 0.235023 -0.146339
 0.278729 -0.104892
 0.315105 -0.064062
 0.351786 -0.020596
 0.382422 0.019524
 0.412229 0.068858
 0.437453 0.114457
 0.457452 0.168655
 0.468918 0.221033
 0.469517 0.275551
 0.457838 0.324568
};
\addplot[thin,dotted,smooth
] table {
0.413716 0.292058
0.389205 0.335600
0.353068 0.370164
0.308485 0.393931
0.260347 0.406479
0.206694 0.406792
0.151056 0.400019
0.112532 0.386591
0.061823 0.366513
0.017225 0.341861
-0.026437 0.318441
-0.068584 0.288707
-0.108796 0.259835
-0.147795 0.227603
-0.185017 0.193841
-0.221988 0.159540
-0.255016 0.121803
-0.293432 0.081791
-0.320360 0.040945
-0.352980 0.000267
-0.374804 -0.045430
-0.399808 -0.090336
-0.412106 -0.137064
-0.423532 -0.179870
-0.420336 -0.237993
-0.408356 -0.286511
-0.381069 -0.330231
-0.344498 -0.363371
-0.299447 -0.384404
-0.250654 -0.394212
-0.201781 -0.393004
-0.155093 -0.385248
-0.108860 -0.370540
-0.061831 -0.351532
-0.016452 -0.327327
0.027403 -0.302183
0.070342 -0.272390
0.112381 -0.242596
0.151866 -0.210225
0.190257 -0.178565
0.226236 -0.144680
0.265877 -0.105415
0.299335 -0.068183
0.331581 -0.027041
0.358586 0.010026
0.384546 0.056265
0.405123 0.099070
0.421106 0.149759
0.428438 0.198210
0.426112 0.248060
0.413716 0.292058
};
\addplot[thin,dotted,smooth
] table {
0.369823 0.259564
0.346960 0.298689
0.314571 0.330427
0.274899 0.353450
0.231759 0.366628
0.182981 0.371035
0.131783 0.365974
0.097000 0.356906
0.048783 0.342622
0.007615 0.321590
-0.033730 0.302481
-0.071068 0.274084
-0.108687 0.250809
-0.143556 0.219066
-0.178988 0.189891
-0.211831 0.158464
-0.241526 0.124065
-0.276710 0.086863
-0.301151 0.050424
-0.331981 0.009642
-0.354018 -0.030177
-0.374772 -0.073802
-0.379693 -0.109860
-0.376942 -0.160319
-0.388246 -0.202196
-0.364717 -0.257148
-0.341196 -0.295290
-0.304397 -0.323982
-0.264645 -0.344991
-0.220203 -0.354838
-0.176022 -0.356740
-0.133534 -0.352041
-0.091018 -0.341290
-0.047510 -0.326121
-0.005455 -0.305657
0.034989 -0.283973
0.074619 -0.257604
0.113277 -0.231118
0.149209 -0.201770
0.183601 -0.172593
0.216057 -0.141768
0.251487 -0.104903
0.281355 -0.070725
0.309426 -0.032262
0.332401 0.001932
0.354655 0.044877
0.370892 0.084510
0.382939 0.131480
0.386863 0.175645
0.382328 0.220666
0.369823 0.259564
};
\addplot[thin,smooth
] table {
 0.328212 0.227569
 0.307940 0.262742
 0.280103 0.292818
 0.246624 0.317296
 0.211326 0.334177
 0.173204 0.346176
 0.116417 0.353657
 0.083394 0.343848
 0.032050 0.330519
 0.000104 0.300067
 -0.038541 0.284362
 -0.072595 0.258418
 -0.106800 0.238447
 -0.138894 0.210533
 -0.170900 0.184961
 -0.201620 0.156210
 -0.229078 0.127249
 -0.259887 0.091509
 -0.282727 0.063165
 -0.307827 0.019999
 -0.330908 -0.013545
 -0.330948 -0.064189
 -0.364696 -0.093947
 -0.374011 -0.141172
 -0.371147 -0.174846
 -0.330350 -0.237501
 -0.300037 -0.261461
 -0.274708 -0.300853
 -0.230667 -0.310648
 -0.190851 -0.331652
 -0.153049 -0.327579
 -0.111008 -0.329433
 -0.073305 -0.319690
 -0.034101 -0.309388
 0.001272 -0.296296
 0.041335 -0.269243
 0.076442 -0.250695
 0.115617 -0.220934
 0.147460 -0.194072
 0.178319 -0.168364
 0.206351 -0.139130
 0.240469 -0.106800
 0.271347 -0.072332
 0.295185 -0.036668
 0.316570 -0.007542
 0.332312 0.036899
 0.341382 0.072222
 0.347040 0.114227
 0.347204 0.154282
 0.340018 0.194286
 0.328212 0.227569
};
\addplot[thin,dotted,smooth
] table {
 0.322533 0.211576
 0.292893 0.243817
 0.272787 0.278424
 0.236153 0.305666
 0.208412 0.320212
 0.174830 0.335251
 0.121229 0.350452
 0.081516 0.341190
 0.032062 0.329059
 -0.010868 0.289830
 -0.049990 0.281521
 -0.078747 0.244272
 -0.105936 0.237544
 -0.137645 0.210857
 -0.167836 0.184740
 -0.194567 0.156215
 -0.223268 0.129592
 -0.252072 0.092751
 -0.273840 0.067913
 -0.298849 0.032085
 -0.324833 -0.005740
 -0.337580 -0.037717
 -0.347669 -0.076298
 -0.362314 -0.135908
 -0.343680 -0.166765
 -0.335773 -0.242397
 -0.301666 -0.267233
 -0.265551 -0.294256
 -0.225447 -0.314801
 -0.178806 -0.326221
 -0.140171 -0.329065
 -0.102668 -0.330485
 -0.065606 -0.318679
 -0.034943 -0.307616
 0.020399 -0.291262
 0.048460 -0.247534
 0.087518 -0.242820
 0.121488 -0.211240
 0.146363 -0.183555
 0.175984 -0.163038
 0.197340 -0.132665
 0.226063 -0.103758
 0.255827 -0.070307
 0.277654 -0.028445
 0.298654 -0.028549
 0.331987 0.035732
 0.336407 0.072201
 0.346142 0.106590
 0.340806 0.145492
 0.328387 0.180242
 0.322533 0.211576
};
\addplot[thin,dotted,smooth
] table {
 0.308601 0.214449
 0.288180 0.236309
 0.270955 0.278382
 0.228297 0.301192
 0.205058 0.318314
 0.169724 0.329074
 0.109863 0.347030
 0.074483 0.334100
 0.033987 0.316723
 -0.020145 0.287997
 -0.053173 0.277730
 -0.090248 0.242250
 -0.108845 0.239658
 -0.139171 0.211053
 -0.166569 0.183110
 -0.195500 0.156843
 -0.224211 0.131548
 -0.236852 0.092441
 -0.273648 0.075463
 -0.297684 0.036614
 -0.324337 -0.003555
 -0.332139 -0.036897
 -0.345262 -0.066572
 -0.357401 -0.109029
 -0.350280 -0.161790
 -0.332071 -0.237863
 -0.299292 -0.266570
 -0.267569 -0.295649
 -0.204915 -0.311097
 -0.174649 -0.326527
 -0.139507 -0.327666
 -0.104457 -0.330379
 -0.051607 -0.314436
 -0.047939 -0.298780
 0.018606 -0.284955
 0.057253 -0.248031
 0.087893 -0.241160
 0.120269 -0.210059
 0.147096 -0.184465
 0.173255 -0.161010
 0.198464 -0.135130
 0.223239 -0.100586
 0.256649 -0.074492
 0.281000 -0.033040
 0.287302 -0.024063
 0.329919 0.040749
 0.338026 0.069548
 0.343355 0.109607
 0.334980 0.167975
 0.325865 0.175938
 0.308601 0.214449
};
\addplot[thin,smooth
] table {
 0.311367 0.217387
 0.296720 0.209836
 0.266365 0.275079
 0.227674 0.302271
 0.200940 0.316936
 0.166889 0.328561
 0.114303 0.347336
 0.042859 0.328348
 0.038008 0.306921
 -0.022093 0.288297
 -0.061417 0.275100
 -0.101392 0.238663
 -0.114804 0.236823
 -0.164398 0.196422
 -0.167351 0.181588
 -0.196576 0.155804
 -0.224773 0.130247
 -0.242174 0.103937
 -0.261129 0.084154
 -0.278075 0.039268
 -0.321230 -0.000488
 -0.334852 -0.032878
 -0.342293 -0.063433
 -0.351264 -0.088019
 -0.326998 -0.155360
 -0.293367 -0.206861
 -0.273002 -0.242552
 -0.267523 -0.289014
 -0.209080 -0.314919
 -0.168801 -0.325935
 -0.140197 -0.328209
 -0.104759 -0.329282
 -0.053538 -0.309055
 -0.038452 -0.306838
 0.035411 -0.267812
 0.061515 -0.247004
 0.088910 -0.238699
 0.122462 -0.207439
 0.148696 -0.183246
 0.175005 -0.167833
 0.207612 -0.141623
 0.214918 -0.099504
 0.255719 -0.076182
 0.281469 -0.031680
 0.287509 -0.022522
 0.330188 0.037818
 0.334745 0.071043
 0.345139 0.108529
 0.329752 0.164018
 0.323723 0.180110
 0.311367 0.217387
};
\end{axis}
\end{tikzpicture}
\caption{The behavior of numerical solution by our method for the mean curvature flow for $t\ge0.09$ with uniform time step size $\Delta t=0.01$.}
\label{fig:MCF_later_times}
\end{figure}
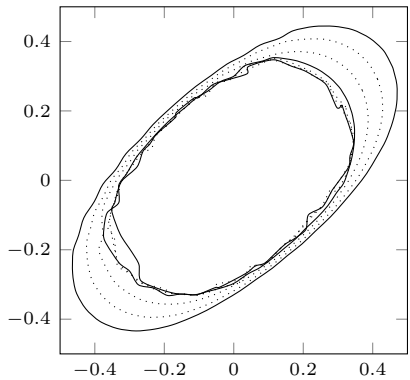

% We used an adaptive method for the time increments of our method and a uniformly small value for the time discretization of the Runge-Kutta method.
% Here, we confirm that our method does not fail even when using large uniform time increments to cause the Runge-Kutta method to fail.
% The initial value is that in Figure \ref{fig:initial_vertices}(a), i.e., uniformly distributed vertices and the time increments are $0.01$ for both our method and the Runge--Kutta method.
% We show the results of the numerical computations in Figure~\ref{fig:MCF_large_time_step}.
% \input{MCF_large_time_step}
% In the Runge--Kutta method, the solution after one step, represented by the dotted line, is already very far from the real solution, confirming that it collapses.
% On the other hand, we can see that our method is stable even with a large time step size, and the behavior of our method is roughly consistent with the previous computations.
% However, we should note that if we try to proceed to later times, we can only do so with an adaptive time step size that considers the time variation of the length.

\subsubsection*{On initial arrangements}

Recall that, as shown in Figure \ref{fig:initial_vertices}, arranging the polygon's vertices that approximate the initial curve uniformly makes the initial distribution slightly different from the original curve.
Below, we check if the rearrangement is mandatory.
The results without the rearrangement are shown in Figure \ref{fig:MCF_without_initial_redistribution}.
We compere them with those with the rearrangement (see Figure~\ref{fig:MCF_comparison}).
It is observed that the results with and without the arrangement are almost identical (for both the proposed method and the Runge--Kutta method).
%Therefore, the rearrangement may not be mandatory for this initial curve, which is not so complicated.
%Therefore, it is not indispensable to make the initial distribution uniform, but it is useful when the numerical computation does not work well.
%An undesired behavior may occur when, for example, the shape of the initial curve is complicated.
%However, we still recommend that the initial distribution is rearranged uniformly so that the numerical computation inherits the property of the enclosed area with good accuracy and avoids an undesired collapse of the numerical solution (see Theorem \ref{thm:dtdL_dtdA_pmbp} and Corollary \ref{cor:dtdL_dtdA_pbmp_audm}).
Therefore, it is not indispensable to make the initial distribution uniform.
However, an undesired behavior may occur when, for example, the shape of the initial curve is complicated.
For such a case, the rearrangement could contribute to inherit the property of the enclosed area with good accuracy and perhaps ensures stability in some sense (see Theorem \ref{thm:dtdL_dtdA_pmbp} and Corollary \ref{cor:dtdL_dtdA_pbmp_audm}).
See \ref{sec:area} for a time discretization that inherits the time evolution law of the enclosed area.
%Summarizing the above, it is not indispensable to make the initial distribution uniform, but it is useful when the numerical computation does not work well, such as when the shape of the initial curve is complex.
\input{MCF_without_initial_redistribution}

% All of the numerical experiments so far have been performed after rearranging the polygon's vertices that approximate the initial curve uniformly.
% As a result, as shown in Figure \ref{fig:initial_vertices}, the vertices after uniform distribution deviate slightly from the original curve.
% Here, we confirm that our method and the Runge-Kutta method work well without uniform distribution of the initial vertices and that the behavior of the solution is almost identical.
% \input{MCF_without_initial_redistribution}
% Figure \ref{fig:MCF_without_initial_redistribution} shows the results of our method and the results of the Runge-Kutta method when there is no uniform distribution at the initial time, i.e., the arrangement in Figure \ref{fig:initial_vertices}(b) is used as the initial value.
% Therefore, because of the time evolution law for the enclosed area (Theorem \ref{thm:dtdL_dtdA_pmbp} and Corollary \ref{cor:dtdL_dtdA_pbmp_audm}) for the semi-discrete problem, we can expect that, given the initial arrangement of vertices, it is preferable to place the initial vertices uniformly using the uniform distribution method before computing the time evolution.
% See \ref{sec:area} for a time discretization focusing on the time evolution law of the enclosed area.

\section{Concluding remarks}
\label{sec:concluding_remarks}
    
In this paper, we proposed the fully discrete polygonal evolution law that inherits the curve-shortening property for planar moving boundary problems. 
The key to the derivation is to devise the definitions of tangential velocities, normal velocities, tangent vectors and normal vectors at each vertex in an implicit manner. 
The proposed evolution law exhibited qualitatively nice behavior even if relatively large time step sizes were employed. 
In particular, the curve-shortening property was corroborated numerically. 
Though only three flows were considered as illustrative examples in this paper, the proposed method is applicable to other flows such as the Helfrich flow.
    
We note several directions for future work. It would be interesting to extend our method to higher dimensional cases and more smooth boundary curves. From the numerical analysis viewpoints, we are also currently attempting to achieve a higher order temporal discretization.

\section*{Acknowledgments}
This work was supported by JSPS KAKENHI No.~18K13455 (KS) and No.~16K17550 (YM), and JST ACT-I Grant Number JPMJPR18US (YM).

\appendix
    
\section{An area-preserving/dissipative fully discrete polygonal evolution law for \eqref{eq:evolution_law}}
\label{sec:area}
\setcounter{alg}{0}
\renewcommand{\thealg}{\Alph{section}.\arabic{alg}}
    
We show that an area-preserving/dissipative fully discrete polygonal evolution law can be constructed by applying a canonical Runge--Kutta method to the semi-discrete evolution law \eqref{eq:polygonal_evolution_law}.

The polygonal evolution law \eqref{eq:polygonal_evolution_law} can be written as a system of ordinary differential equations:
\begin{equation}
	\frac{\mathrm{d}\bm{X}^t}{\mathrm{d}t}
	=
	\bm{F}(\bm{X}^t),
	\quad
	t>0,
	\label{eq:ODE_RK}
\end{equation}
where
\begin{align}
	\bm{X}^t
	&:=
	((\bm{X}_1^t)^{\mathrm{T}},\ldots,(\bm{X}_N^t)^{\mathrm{T}})^{\mathrm{T}}\in\mathbb{R}^{2N},\\
	\bm{F}(\bm{X}^t)
	&:=
	(\bm{f}_1(\bm{X}^t)^{\mathrm{T}},\ldots,\bm{f}_N(\bm{X}^t)^{\mathrm{T}})^{\mathrm{T}}\in\mathbb{R}^{2N},
	\quad
	\bm{f}_i(\bm{X}^t)
	:=
	V_i^t\bm{N}_i^t+W_i^t\bm{T}_i^t\in\mathbb{R}^2.
	\label{eq:ODE_RK_functions}
\end{align}
In this section, the unit tangent and the unit outward normal vectors on vertices are defined
in the same manner to Section \ref{sec:polygonal_moving_boundary_problem}.
% similarly for the semi-discrete polygonal moving boundary problem (see Section \ref{sec:polygonal_moving_boundary_problem}).
For the solution to an $s$-stage Runge--Kutta method
\begin{equation}
	\begin{split}
		\bm{Y}_i
		&=
		(\bm{Y}_{i,1}^{\mathrm{T}},\ldots,\bm{Y}_{i,N}^{\mathrm{T}})^{\mathrm{T}}
		=
		\bm{X}^{n}+\Delta t^n\sum_{j=1}^sa_{ij}\bm{F}(\bm{Y}_j),
		\quad
		i=1,2,\ldots,s,\\
		\bm{X}^{n+1}
		&=
		\bm{X}^{n}+\Delta t^n\sum_{i=1}^sb_i\bm{F}(\bm{Y}_i),
	\end{split}
	\label{eq:RK}
\end{equation}
we have
\begin{align}
	&\frac{\mathcal{A}[\Omega^{n+1}]-\mathcal{A}[\Omega^{n}]}{\Delta t^n}
	=
    \frac{1}{2}\sum_{i=1}^N\frac{J\bm{X}_{i-1}^{n+1}\cdot\bm{X}_i^{n+1}-J\bm{X}_{i-1}^n\cdot\bm{X}_i^{n}}{\Delta t^n}\\
	&\quad=
	\frac{1}{2}\sum_{i=1}^N\left[\sum_{k=1}^sb_kJ(\bm{Y}_{k,i-1}-\bm{Y}_{k,i+1})\cdot\bm{f}_i(\bm{Y}_k)\right.\\
	&\hspace{50pt}\left.+\Delta t^n\sum_{j,k=1}^s(b_kb_j-b_ja_{jk}-b_ka_{kj})J\bm{f}_{i-1}(\bm{Y}_k)\cdot\bm{f}_i(\bm{Y}_j)\right]\\
	&\quad=
	\sum_{k=1}^sb_k\sum_{i=1}^Nv_i(\bm{Y}_k)r_i(\bm{Y}_k)+\sum_{k=1}^sb_k\mathrm{err}_{\mathcal{A},k}\\
	&\hspace{50pt}+\frac{\Delta t^n}{2}\sum_{j,k=1}^s(b_kb_j-b_ja_{jk}-b_ka_{kj})J\bm{f}_{i-1}(\bm{Y}_k)\cdot\bm{f}_i(\bm{Y}_j),
    \label{eq:time_difference_area_RK0}
\end{align}
where
\begin{equation}
	\mathrm{err}_{\mathcal{A},k}
	:=
	\sum_{i=1}^N\left(W_i(\bm{Y}_k)s_i(\bm{Y}_k)-\frac{v_{i+1}(\bm{Y}_k)-v_i(\bm{Y}_k)}{2}\right)\frac{r_{i+1}(\bm{Y}_k)-r_i(\bm{Y}_k)}{2},
	\label{eq:time_difference_area_RK}
\end{equation}
where $s_i(\bm{Y}_k)=\sin\varphi_{i,k}/2$ and $\varphi_{i,k}$ denotes the signed angle between the $i$th edge and the $(i+1)$th edge of $\Gamma(\bm{Y}_k)$.
Note that the last equality in \eqref{eq:time_difference_area_RK0} follows from a similar calculation to \eqref{par_A_Omega}.
% Note the last equality in \eqref{eq:time_difference_area_RK0} is shown in the same way as the method that proved the formula for the time derivative of the enclosed area in Theorem \ref{thm:dtdL_dtdA_pmbp}.
    
If the distribution of initial vertices is uniform and we apply the asymptotic uniform distribution method for computing tangential velocities, then $\mathrm{err}_{\mathcal{A},k}\equiv0$ for all $k$.
Moreover, if we assume the condition
\begin{equation}
	b_ja_{jk}+b_ka_{kj}=b_kb_j,
	\quad
	j,k=1,2,\ldots,s,
	\label{eq:symplecticity}
\end{equation}
then the last term in \eqref{eq:time_difference_area_RK0} also vanishes.
Summarizing the above, we obtain the following theorem.

\begin{thm}
Suppose that the initial vertices are allocated uniformly, i.e. $r_i^0=\text{const.}$ and that the tangential velocities are set based on the asymptotic uniform distribution method.
Then, for the polygonal curve $\Gamma^n$ that evolves according to an $s$-stage Runge--Kutta method applied to \eqref{eq:ODE_RK} with the coefficients satisfying \eqref{eq:symplecticity}, it follows that
%Let $\Gamma^n=\bigcup_{i=1}^N\Gamma_i^n$ denote a polygonal Jordan curve at the $n$th time step and suppose that its time evolution is governed by the polygonal evolution law \eqref{eq:ODE_RK}.
%Moreover, suppose that the initial arrangement of the vertices is uniform.
%For the solution to an $s$-stage Runge--Kutta method that satisfies the condition \eqref{eq:symplecticity}, we have
\begin{equation}
	\frac{\mathcal{A}[\Omega^{n+1}]-\mathcal{A}[\Omega^{n}]}{\Delta t^n}
	=
	\sum_{k=1}^sb_k\sum_{i=1}^Nv_i(\bm{Y}_k)r_i(\bm{Y}_k).
\end{equation}
Namely, a discrete analogue of the formula for the time derivative of the enclosed area holds.
\end{thm}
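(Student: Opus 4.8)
The plan is to read off the conclusion from the already-established identity \eqref{eq:time_difference_area_RK0}, which decomposes the area increment into three pieces: the desired main term $\sum_{k=1}^s b_k\sum_{i=1}^N v_i(\bm{Y}_k)r_i(\bm{Y}_k)$, the weighted stage-error sum $\sum_{k=1}^s b_k\,\mathrm{err}_{\mathcal{A},k}$, and the quadratic coupling term carrying the factor $b_kb_j-b_ja_{jk}-b_ka_{kj}$. Hence the proof reduces to showing that the last two pieces vanish under the stated hypotheses, after which the identity collapses to exactly the claimed formula. I would present these two vanishing statements as the two halves of the argument.

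The quadratic coupling term is the easy half. Substituting the canonicity relation \eqref{eq:symplecticity} makes every coefficient $b_kb_j-b_ja_{jk}-b_ka_{kj}$ identically zero, so the entire double sum over $j,k$ (and over $i$) vanishes term by term, regardless of the geometry of the stage polygons $\Gamma(\bm{Y}_k)$ or of the values $\bm{f}_i(\bm{Y}_k)$. This step is a direct substitution and needs no further structure from the problem.

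The stage-error half is where the real work lies. It suffices to prove $\mathrm{err}_{\mathcal{A},k}=0$ for each $k$, and, inspecting \eqref{eq:time_difference_area_RK}, the cleanest sufficient condition is that every stage polygon $\Gamma(\bm{Y}_k)$ has uniform edge lengths, so that each factor $r_{i+1}(\bm{Y}_k)-r_i(\bm{Y}_k)$ is zero. I would try to obtain this from the asymptotic uniform distribution method exactly as in Corollary~\ref{cor:dtdL_dtdA_pbmp_audm}: on the uniform set $\mathcal{U}=\{\bm{X}:r_1(\bm{X})=\dots=r_N(\bm{X})\}$ the relaxation contribution $(\mathcal{L}[\Gamma]/N-r_i)\omega$ in \eqref{cond:Wit} vanishes, so the tangential velocities drive all edges at the common rate $\partial_t\mathcal{L}/N$; consequently $\mathcal{U}$ is invariant for the exact flow of \eqref{eq:ODE_RK} whenever the initial data are uniform. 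The remaining task is to transfer this invariance from the exact flow to the internal Runge--Kutta stages.

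This transfer is the main obstacle. The set $\mathcal{U}$ is cut out by the \emph{quadratic} conditions $|\bm{X}_i-\bm{X}_{i-1}|^2=|\bm{X}_j-\bm{X}_{j-1}|^2$, and canonical methods satisfying \eqref{eq:symplecticity} are precisely those that preserve quadratic first integrals; however, the edge-length differences are first integrals only along $\mathcal{U}$ and not as global functions on $\mathbb{R}^{2N}$, so the classical quadratic-invariant preservation theorem does not apply off the manifold. I therefore expect to have to argue stage uniformity more carefully --- for instance by a bootstrap on the stage equations $\bm{Y}_i=\bm{X}^n+\Delta t^n\sum_j a_{ij}\bm{F}(\bm{Y}_j)$ starting from the uniform $\bm{X}^n$, or, failing exact stage uniformity, by exhibiting the cancellation directly, showing that the tangential velocity $W_i(\bm{Y}_k)$ supplied by the method forces the bracketed factor $W_i(\bm{Y}_k)s_i(\bm{Y}_k)-\tfrac12\bigl(v_{i+1}(\bm{Y}_k)-v_i(\bm{Y}_k)\bigr)$ to pair with $r_{i+1}(\bm{Y}_k)-r_i(\bm{Y}_k)$ so that the sum in \eqref{eq:time_difference_area_RK} telescopes to zero. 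Once $\mathrm{err}_{\mathcal{A},k}=0$ is secured, combining it with the vanishing quadratic term in \eqref{eq:time_difference_area_RK0} yields the theorem.
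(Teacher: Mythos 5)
Your decomposition is exactly the paper's: the identity \eqref{eq:time_difference_area_RK0} splits the area increment into the claimed main term, the weighted stage errors $\sum_{k=1}^s b_k\,\mathrm{err}_{\mathcal{A},k}$, and the coupling term carrying $b_kb_j-b_ja_{jk}-b_ka_{kj}$, and your treatment of the coupling term --- term-by-term annihilation under \eqref{eq:symplecticity}, with no geometric input --- is verbatim what the paper does. Where your write-up falls short is that you never close the other half: the proposal ends with a menu of strategies (a bootstrap on the stage equations, or a direct telescoping cancellation in \eqref{eq:time_difference_area_RK}) and an admission that you expect difficulty. As submitted, the argument is therefore incomplete at precisely the step the theorem needs, namely $\mathrm{err}_{\mathcal{A},k}=0$ for every internal stage. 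The paper closes this step in a single sentence: uniform initial vertices plus the asymptotic uniform distribution method give $\mathrm{err}_{\mathcal{A},k}\equiv 0$; that is, it takes each stage polygon $\Gamma(\bm{Y}_k)$ to have equal edge lengths, so that every factor $r_{i+1}(\bm{Y}_k)-r_i(\bm{Y}_k)$ vanishes --- exactly the ``cleanest sufficient condition'' you identify. A complete proof at the paper's level of rigor states this as the argument rather than leaving it open.

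That said, your scepticism about stage uniformity is well placed and is the most substantive observation in your proposal. The uniform set $\mathcal{U}=\{r_1=\dots=r_N\}$ is invariant under the exact semi-discrete flow (this is the content of \eqref{eq:UDM_1} and Corollary~\ref{cor:dtdL_dtdA_pbmp_audm}: the AUDM forces $\partial_t(r_i-\mathcal{L}/N)=-\omega\,(r_i-\mathcal{L}/N)$), but, as you note, $\mathcal{U}$ is a curved manifold and $r_i-\mathcal{L}/N$ is not a global quadratic first integral, so the canonical-Runge--Kutta preservation theorem is of no help, and an affine combination $\bm{X}^n+\Delta t^n\sum_j a_{kj}\bm{F}(\bm{Y}_j)$ of tangent directions generically leaves $\mathcal{U}$ at second order in $\Delta t^n$. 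The paper does not engage with this: its assertion that $\mathrm{err}_{\mathcal{A},k}\equiv 0$ tacitly assumes that the stages inherit the uniformity that the AUDM of Section~\ref{subsec:tangential_velocity_pmbp} guarantees only along the exact flow. So you and the paper share the same soft spot; the difference is that the paper asserts the missing step while you flag it without resolving it. To repair it beyond assertion, one would have to modify the tangential velocities at the stage configurations so that $r_i(\bm{Y}_k)\equiv\mathcal{L}[\Gamma(\bm{Y}_k)]/N$ holds by construction, or exhibit the direct cancellation you mention; your bootstrap sketch, as it stands, delivers neither.
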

    
\begin{rem}
A Runge--Kutta method whose coefficients satisfy the condition \eqref{eq:symplecticity} is called canonical or symplectic~(see, e.g.~\cite{bu16,HLW06,SC94}). 
Such a Runge--Kutta method preserves any quadratic invariants of ordinary differential equations~\cite{CO87}.
In this sense, the above theorem looks obvious because the area is a quadratic quantity. 
But we have presented the calculation \eqref{eq:time_difference_area_RK0} to discuss the effect of tangential velocities.
    
The simplest example of a canonical Runge--Kutta method is the mid-point rule: $s=1$, $a_{11} = 1/2$ and $b_1=1$.
Since the second-order convergence is usually expected for the mid-point rule, the rule could be a suitable choice when spatial discretization is of second-order.
\end{rem}
    
\bibliographystyle{plain}
\bibliography{references.bib}
\end{document}